\def\english{\selectlanguage{english}}
\newtheorem{defin}{Definition}[section]
\newtheorem{theorem}{Theorem}[section]
\newtheorem{proposition}{Proposition}[section]
\newtheorem{lemma}{Lemma}[section]
\newtheorem{remark}{Remark}[section]
\newcounter{steps}
\newenvironment{proof}[1][]{%
\par\medbreak\setcounter{steps}{0}
{\noindent\bfseries Proof#1. }} {\hfill\fbox{\ }\medbreak}
\newcounter{substeps}[steps]
\def\eps{\varepsilon}
\def\R{\mathbb{R}}
\def\N{\mathbb{N}}
\def\O{\mathcal{O}}
\def\({\begin{eqnarray}}
\def\){\end{eqnarray}}
\def\[{\begin{eqnarray*}}
\def\]{\end{eqnarray*}}
\def\part#1#2{{\partial #1\over\partial #2}}
\def\dx{\nabla_x} 
\def\dv{\, \nabla_v}
\def\dt{\partial_t}
\def\d{\, \text{d}}
\newcommand{\intv}[1]{
\int _{\R ^d} \!#1 \;\mathrm{d}v}
\newcommand{\intvp}[1]{
\int _{\R ^d} \!#1 \;\mathrm{d}v^\prime}
\newcommand{\vp}[0]{
v^{\prime}}
\newcommand{\fe}[0]{
f ^\varepsilon}
\newcommand{\fin}[0]{
f ^{\mathrm{in}}}
\newcommand{\Divx}[0]{
\mathrm{div}_x}
\newcommand{\Divv}[0]{
\mathrm{div}_v}
\newcommand{\fz}[0]{
f}
\newcommand{\fo}[0]{
f ^{1}}
\newcommand{\lime}[0]{
\lim _{\varepsilon \searrow 0}}
\newcommand{\mo}[0]{
M_{\Omega}}
\newcommand{\mof}[0]{
M_{\Omega [\fz]}}
\newcommand{\calmo}[0]{
{\mathcal M}_{\Omega}}
\newcommand{\zo}[0]{
Z_\Omega}
\newcommand{\Phio}{
\Phi _\Omega}
\newcommand{\Phiof}{
\Phi _{\Omega[\fz]}}
\newcommand{\sphere}[0]{
\mathbb{S}^{d-1}}
\newcommand{\ltmo}{
L^2 _{\mo}}
\newcommand{\homo}{
H^1 _{\mo}}
\newcommand{\thomo}{
\tilde{H}^1 _{\mo}}
\newcommand{\intth}[1]{
\int _0 ^{\pi} #1 \;\mathrm{d}\theta}
\newcommand{\calL}[0]{
\mathcal{L}}
\newcommand{\pf}[0]{
P_f}
\newcommand{\sumi}[0]{
\sum _{i = 1}^{d-1}}
\newcommand{\sumj}[0]{
\sum _{j = 1}^{d-1}}
\def\pmb#1{\setbox0=\hbox{$#1$}
  \kern-.025em\copy0\kern-\wd0
  \kern-.05em\copy0\kern-\wd0
  \kern-.025em\raise.0433em\box0 }
\begin{document}
\english

\title{Hydrodynamic limits for kinetic flocking models of Cucker-Smale type}


\author{
P. Aceves-S\'anchez
\thanks{Department of Mathematics, Imperial College London, London SW7 2AZ UK.  E-mail : {\tt p.aceves-sanchez@imperial.ac.uk}
},
M. Bostan
\thanks{Aix Marseille Universit\'e, CNRS, Centrale Marseille, Institut de Math\'ematiques de Marseille, UMR 7373, Ch\^ateau Gombert 39 rue F. Joliot Curie, 13453 Marseille Cedex 13 FRANCE. E-mail : {\tt mihai.bostan@univ-amu.fr}
}, 
J.A. Carrillo
\thanks{Department of Mathematics, Imperial College London, London SW7 2AZ UK.  E-mail : {\tt carrillo@imperial.ac.uk}
},
P. Degond
\thanks{Department of Mathematics, Imperial College London, London SW7 2AZ UK.  E-mail : {\tt p.degond@imperial.ac.uk}.
}
}
\date{ (\today)}

\maketitle

%


\begin{abstract}
We analyse the asymptotic behavior for kinetic models describing the collective behavior of animal populations. We focus on models for self-propelled individuals, whose velocity relaxes toward the mean orientation of the neighbors. The self-propelling and friction forces together with the alignment and the noise are interpreted as a collision/interaction mechanism acting with equal strength. We show that the set of generalized collision invariants, introduced in \cite{DM08}, is equivalent in our setting to the more classical notion of collision invariants, i.e., the kernel of a suitably linearized collision operator. After identifying these collision invariants, we derive the fluid model, by appealing to the balances for the particle concentration and orientation. We investigate 
the main properties of the macroscopic model for a general potential with radial symmetry. 
\end{abstract}

\paragraph{Keywords:}
Vlasov-like equations, Swarming, Cucker-Smale model, Vicsek model.


\paragraph{AMS classification:} 92D50, 82C40, 92C10.





\section{Introduction}
\label{Intro}
Kinetic models have been introduced in the last years for the mesoscopic description of collective behavior of agents/particles with applications in collective behavior of cell and animal populations, see \cite{review,review2,MT14} and the references therein for a general overview on this active field. These models usually include alignment, attraction and repulsion as basic bricks of interactions between individuals. We refer to \cite{Neu77,BraHep77,Dob79,CCR10,BCC11,BCC12,HLL09,CCH,CCHS} and the references therein for a derivation of the kinetic equation from the microscopic models. 

In this work, we focus on the derivation of macroscopic equations for the collective motion of self-propelled particles with alignment and noise when a cruise speed for individuals is imposed asymptotically for large times as in \cite{DorChuBerCha06, ChuHuaDorBer07, ChuDorMarBerCha07, CarDorPan09, UAB25}. More precisely, in the presence of friction and self-propulsion and the absence of other interactions, individuals/particles accelerate or break to achieve a cruise speed exponentially fast in time. The alignment between particles is imposed via localized versions of the Cucker-Smale or Motsch-Tadmor reorientation procedure \cite{CS2,HT08,HL08,CFRT10,review,MT11} leading to relaxation terms to the mean velocity modulated or not by the density of particles. By scaling the relaxation time towards the asymptotic cruise speed, or equivalently, penalizing the balance between friction and self-propulsion, this alignment interaction leads asymptotically to variations of the classical kinetic Vicsek-Fokker-Planck equation with velocities on the sphere, see \cite{VicCziBenCohSho95,DM08,FL11,DFL10,DFL15,BosCar13,BosCar15}. It was shown in \cite{BCCD16} that particular versions of the localized kinetic Cucker-Smale model can lead to phase transitions driven by noise. Moreover, these phase transitions are numerically stable in this asymptotic limit converging towards the phase transitions of the limiting versions of the corresponding kinetic Vicsek-Fokker-Planck equation.

In this work, we choose a localized and normalized version of the Cucker-Smale model not showing phase transition. More precisely, let us denote by $f = f(t,x,v) \geq 0$ the particle density in the phase space $(x,v) \in \R^d \times \R^d$, with $d \geq 2$. The standard self-propulsion/friction mechanism leading to the cruise speed of the particles in the absence of alignment is  given by the term $\mathrm{div}_v \{f (\alpha - \beta |v|^2)v\}$ with $\alpha, \beta >0$, and the relaxation toward the normalized mean velocity writes $\mathrm{div}_v \{ f( v - \Omega[f])\}$ cf. \cite{DM08,VicCziBenCohSho95,CouKraFraLev05}. 
Here, for any particle density $f(x,v)$, the notation $\Omega[f]$ stands for the orientation of the mean velocity
\begin{eqnarray*}
\Omega[f] := \left \{
\begin{array}{ll}
\displaystyle \frac{\intv{f(\cdot,v)v}}{\left |\intv{f(\cdot,v)v}\right |}, & \mbox{ if } \intv{f(\cdot,v)v} \in \R^d \setminus \{0\} \, ,\\
0, & \mbox{ if } \intv{f(\cdot,v)v} = 0 \, .
\end{array}
\right.
\end{eqnarray*}
Notice that we always have
\[
\rho u[f] := \intv{f(\cdot,v)v} = \left | \intv{f(\cdot,v)v} \right | \Omega [f] \qquad\mbox{with } \rho:=  \intv{f(\cdot,v)}.
\]
Let us remark that the standard localized Cucker-Smale model would lead to $\rho \mathrm{div}_v \{ f( v -u[f])\}$ while the localized Motsch-Tadmor model would lead to  $\mathrm{div}_v \{ f( v -u[f])\}$. Our relaxation term towards the normalized local velocity $\Omega [f]$ does not give rise to phase transition in the homogeneous setting on the limiting Vicsek-Fokker-Planck-type model on the sphere according to \cite{DFL10} and it produces a competition to the cruise speed term comprising a tendency towards unit speed. Including random Brownian fluctuations in the velocity variable leads to the kinetic Fokker-Planck type equation
\[
\partial _t f + v \cdot \nabla _x f + \mathrm{div}_v \{f ( \alpha - \beta |v|^2)v\} = \Divv \{ \sigma \nabla _v f + f ( v - \Omega[f]) \},\;\;(t,x,v) \in \R_+ \times \R^d \times \R^d \,.
\]
We include this equation in a more general family of equations written in a compact form as
\begin{equation}\label{eq:kinetic}
\dt f + v \cdot \dx f = Q (f) \, ,
\end{equation}
where 
\begin{equation}\label{eq:col}
 Q(f) = \Divv \{ \sigma \dv f + f(v - \Omega[f]) + \eta f \dv V \} \, ,
\end{equation}
for any density distribution $f$ with $V$ a general confining potential in the velocity variables and $\eta > 0$ (see Lemma \ref{RSVol4} for more information on the type of potentials that we consider). In the particular example considered above we take $V=V_{\alpha, \beta} (|v|):= \beta \frac{|v|^4}{4} - \alpha \frac{|v|^2}{2}\;$.

We investigate the large time and space scale regimes of the kinetic tranport equation \eqref{eq:kinetic} with collision operator given by \eqref{eq:col}. Namely, we study the asymptotic behavior when $\eps \to 0$ of 
\begin{equation}
\label{eq:main}
\dt \fe + v \cdot \dx \fe = \frac{1}{\eps} Q (\fe) \, ,
\end{equation}
supplemented with the initial condition
\begin{equation*}
\label{eq:IC}
\fe (0,x,v) = \fin (x,v),\;\;(x,v) \in \R^d \times \R^d \, .
\end{equation*}

The rescaling taken in the kinetic transport equation \eqref{eq:main} with confining potential $V_{ \alpha, \beta}$ can be seen as an intermediate scaling between the ones proposed in \cite{BosCar15} and \cite{BD14}. The difference being that we have a relaxation towards the normalized mean velocity $\Omega[f]$ rather than the mean velocity $u[f]$ as in \cite{BD14,BosCar15}. This difference is important since in the first case there is no phase transition in the homogeneous limiting setting on the sphere as we mentioned above, while in the second there is, see \cite{DFL10,BCCD16,BosCar15}. In fact, in \cite{BosCar15} the scaling corresponds to $\eta = 1 / \eps$ in \eqref{eq:main}, that is the relaxation to the cruise speed is penalized with a term of the order of $1 / \eps^2$. Whereas in \cite{BD14} the scaling correponds to $\eta = \eps$, that is the cruise speed is not penalized at all.

The methodology followed in \cite{BosCar15} lies within the context of measure solutions by introducing a projection operator onto the set of measures supported in the sphere whose  radius is the critical speed $r =\sqrt{\alpha/\beta}$. These technicalities are needed because the zeroth order expansion of $f_\eps$ lives on the sphere. This construction followed closely the average method in gyro-kinetic theory \cite{BosAsyAna,BosTraEquSin,BosGuiCen3D}.

However, in our present case we will show in contrast to \cite{BosCar15,BD14} that there are no phase transitions which is in accordance with the results obtained in \cite{DFL15} for the kinetic Vicsek-Fokker-Planck equation with analogous alignment operator on the sphere.
A modified version of \eqref{eq:kinetic}-\eqref{eq:col} in which phase-transitions occur was studied in \cite{BCCD16} whose analysis is postponed to a future work to focus here on the mathematical difficulties of the asymptotic analysis. Another difference in the present case is that the zeroth order expansion of $f_\eps$ will be parameterized by Von Mises-Fisher distributions in the whole velocity space, that is $f(t,x,v) = \rho (t,x) M_{\Omega(t,x)}(v)$, with $\rho$ and $\Omega$ being, respectively, the density and the mean orientation of the particles. And where for any $\Omega \in \mathbb{S}^{d-1}$ we define (see Sect. 2)
\begin{equation}\label{eq:M_omega}
 M_{\Omega} ( v) = \frac{ 1}{Z_\Omega} \exp \bigg( - \frac{ \Phi_{ \Omega} ( v)}{ \sigma} \bigg) \, , \; \text{ with } Z_\Omega = \int_{ \R^d} \exp \bigg( - \frac{ \Phi_{ \Omega} ( v')}{ \sigma} \bigg) \d v'
\end{equation}
and
\[
\Phi ( v) = \frac{ | v - \Omega|^2}{ 2} + V( | v|).
\]

The main result of this paper is the asymptotic analysis of the singularly perturbed kinetic transport 
equation of Cucker-Smale type \eqref{eq:main}. The particle density $\rho$ and the orientation $\Omega$
obey the hydrodynamic type equations given in the following result.

\begin{theorem}
\label{MainResult1}
Let $\fin \geq 0$ be a smooth initial particle density with nonvanishing orientation at any $x \in \R^d$. For any $\eps >0$ we consider the problem
\[
\partial _t \fe + v \cdot \nabla _x \fe = \frac{1}{\eps} \Divv \{ \sigma \nabla _v \fe + \fe ( v - \Omega[\fe] ) + \fe \nabla _v V(|v|)\},\;\;(t,x,v) \in \R_+ \times \R^d \times \R^d \, ,
\]
with initial condition
\[
\fe (0,x,v) = \fin (x,v),\;\;(x,v) \in \R^d \times \R^d \, .
\]
At any $(t,x) \in \R_+ \times \R^d$ the leading order term in the Hilbert expansion $f_\eps = f + \eps f_1 + \ldots$ is an equilibrium distribution of $Q$, that is $f(t,x,v) = \rho (t,x) M_{\Omega(t,x)}(v)$ with $M_{\Omega(t,x)}(v)$ defined in \eqref{eq:M_omega}, where the concentration $\rho$ and the orientation $\Omega$ satisfy 

\begin{equation}\label{eq:bas_rho}
 \partial _t \rho + \Divx (\rho c_1 \Omega) = 0,\;\;(t,x) \in \R_+ \times \R^d \, ,
\end{equation}
\begin{equation}\label{eq:bas_omega}
 \partial _t \Omega + c_2(\Omega \cdot \nabla _x ) \Omega + \sigma (I_d - \Omega \otimes \Omega) \frac{\nabla _x \rho }{\rho} = 0,\;\;(t,x) \in \R_+ \times \R^d \, ,
\end{equation}
with initial conditions
\[
\rho (0,x) = \intv{\fin (x,v)},\;\;\Omega (0,x) = \frac{\intv{\fin (x,v)v}}{\left | \intv{\fin (x,v)v} \right |},\;\;x\in \R^d \, . 
\]
The constants $c_1, c_2$ are given by
\begin{equation*}
 c_1 = \frac{\int_{\R_+}r^d \intth{\cos \theta \, e(\cos \theta, r) \sin ^{d-2} \theta}\d r}{\int_{\R_+}r^{d-1} \intth{e(\cos \theta, r) \sin ^{d-2} \theta}\d r} \, ,
\end{equation*}
\begin{equation*}
c_2 = \frac{\int_{\R_+}r^{d+1} \intth{\cos \theta \, \chi (\cos \theta, r) \,  e(\cos \theta, r) \sin ^{d-1} \theta}\d r}{\int_{\R_+}r^{d} \intth{\chi (\cos \theta, r) \, e(\cos \theta, r) \sin ^{d-1} \theta}\d r} \, ,
\end{equation*}
and the function $\chi$ solves
\begin{align*}
\label{EllipticChi}
- \sigma \partial _c \left [r^{d-3} (1 - c^2) ^{\frac{d-1}{2}} e(c,r) \partial _c \chi   \right ] & - \sigma \partial _r \left [r^{d-1}(1 - c^2 ) ^{\frac{d-3}{2}} e(c,r) \partial _r \chi  \right ] + \sigma (d-2) r ^{d-3} (1- c^2) ^{\frac{d-5}{2}} e \chi \nonumber \\
& = r^d (1-c^2) ^{\frac{d-2}{2}}e(c,r) \, ,
\end{align*}
where $e(c,r) = \exp(rc/\sigma) \exp(- (r^2 + 1)/(2\sigma) - V(r)/\sigma)$.
\end{theorem}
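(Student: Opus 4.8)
The plan is to run a Hilbert expansion together with the method of generalized collision invariants, as in \cite{DM08}. First I would rewrite the collision operator as $Q(f)=\sigma\,\Divv\{M_{\Omega[f]}\,\nabla_v(f/M_{\Omega[f]})\}$ — legitimate because $\nabla_v\Phi_\Omega=(v-\Omega)+\nabla_v V$ and $M_\Omega=Z_\Omega^{-1}e^{-\Phi_\Omega/\sigma}$ — and identify the equilibria: multiplying $Q(f)=0$ by $f/M_{\Omega[f]}$ and integrating by parts gives $\intv{M_{\Omega[f]}\,|\nabla_v(f/M_{\Omega[f]})|^2}=0$, so $f=\rho M_\Omega$ with $\rho\geq0$, $\Omega\in\sphere$, the consistency $\Omega[\rho M_\Omega]=\Omega$ being a consequence of the radial symmetry of $V$ (which forces $\intv{M_\Omega v}=c_1\Omega$, $c_1>0$). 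Plugging $\fe=f+\eps f_1+\cdots$, $\Omega[\fe]=\Omega+\eps\Omega_1+\cdots$ into the rescaled equation, the $\O(1/\eps)$ balance then gives $f(t,x,v)=\rho(t,x)M_{\Omega(t,x)}(v)$.

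\textbf{Mass balance.} Next I would use that $\intv{Q(g)}=0$ for every $g$: integrating the rescaled equation in $v$ and letting $\eps\searrow0$ yields $\partial_t\rho+\Divx\intv{fv}=0$, i.e. $\partial_t\rho+\Divx(\rho c_1\Omega)=0$. Passing to spherical coordinates $v=r\omega$ with polar angle $\theta$ measured from $\Omega$ one has $\Phi_\Omega(v)=\tfrac12(r^2-2r\cos\theta+1)+V(r)$, hence $e^{-\Phi_\Omega/\sigma}=e(\cos\theta,r)$, and evaluating $c_1=\Omega\cdot\intv{M_\Omega v}$ in these coordinates (the measure of $\sphere\cap\Omega^\perp$ cancels between numerator and denominator) reproduces the stated formula for $c_1$; this is \eqref{eq:bas_rho}.

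\textbf{Generalized collision invariants.} The core of the argument is to determine, for fixed $\Omega\in\sphere$, the functions $\psi$ with $\intv{Q(f)\,\psi}=0$ for every $f$ such that $\Omega[f]=\Omega$. With $g=f/M_\Omega$, the constraint $\Omega[f]=\Omega$ reads $(I_d-\Omega\otimes\Omega)\intv{M_\Omega\,g\,v}=0$, and integration by parts turns the defining relation into $\intv{M_\Omega\,\nabla_v g\cdot\nabla_v\psi}=0$ for all such $g$; by duality this is equivalent to $\Divv(M_\Omega\,\nabla_v\psi)=M_\Omega\,(A\cdot v)$ for a constant $A\perp\Omega$ (the Lagrange multiplier of the constraint). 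So, modulo constants, the generalized collision invariants form the $(d-1)$-dimensional family $\{\psi_\Omega[A]:A\perp\Omega\}$, and rotational invariance forces $\psi_\Omega[A](v)=\chi(\cos\theta,r)\,(A\cdot w)$, $w$ being the unit vector of $v-(v\cdot\Omega)\Omega$. Writing the $v$-Laplacian in the coordinates $(r,\theta,w)$ and using that $A\cdot w$ is a first spherical harmonic on the equatorial sphere $\sphere\cap\Omega^\perp$ (eigenvalue $-(d-2)$), the equation $\Divv(M_\Omega\nabla_v\psi)=M_\Omega(A\cdot v)$ becomes exactly the degenerate elliptic equation for $\chi$ in the statement, with $e(c,r)$ in the role of $Z_\Omega M_\Omega$; I would establish its solvability (with normalization $\intv{M_\Omega\,\psi_\Omega[A]}=0$) by a weighted Lax--Milgram argument, checking that the Dirichlet form on the left-hand side stays coercive despite the weights degenerating at $c=\pm1$.

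\textbf{Orientation balance and conclusion.} Since $\psi_{\Omega[\fe]}[A]$ is a generalized collision invariant, $\intv{Q(\fe)\,\psi_{\Omega[\fe]}[A]}=0$ identically in $\eps$; multiplying the rescaled equation by it and integrating kills the stiff term, leaving $\intv{\psi_{\Omega[\fe]}[A]\,(\partial_t\fe+v\cdot\nabla_x\fe)}=0$. Letting $\eps\searrow0$ (with $\fe\to\rho M_\Omega$, $\Omega[\fe]\to\Omega$) gives $\intv{\psi_\Omega[A]\,(\partial_t(\rho M_\Omega)+v\cdot\nabla_x(\rho M_\Omega))}=0$ for every $A\perp\Omega$. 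Then I would differentiate $M_\Omega$ in $\Omega$, getting $\partial_\Omega M_\Omega\cdot b=\sigma^{-1}M_\Omega(v-c_1\Omega)\cdot b$ for $b\perp\Omega$; together with $\intv{M_\Omega\psi_\Omega[A]}=0$ this makes the $\partial_t\rho$ and $\nabla_x\rho$ contributions enter only through $\partial_\Omega M_\Omega$, and all surviving $v$-integrals collapse by rotational symmetry to one-dimensional integrals weighted by $\chi\,e$. Collecting them, enforcing $|\Omega|\equiv1$ (so $\partial_t\Omega$ and $(\Omega\cdot\nabla_x)\Omega$ are tangent to $\sphere$ and only their $(I_d-\Omega\otimes\Omega)$-projections survive) and dividing by the coefficient of $\partial_t\Omega$ yields \eqref{eq:bas_omega} with $c_2$ as stated; the initial data $\rho(0,\cdot)=\intv{\fin}$, $\Omega(0,\cdot)=\Omega[\fin]$ come from the $\O(1)$ term of the expansion at $t=0$, $\Omega[\fin]$ being well-defined by the non-vanishing orientation hypothesis. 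I expect the main obstacles to be the identification and solvability of the elliptic equation for $\chi$ — in particular coercivity of the degenerate Dirichlet form near the poles — and the careful bookkeeping of the $\Omega[\fe]$-dependence through the linearization, which is precisely what produces the constants $c_1,c_2$ and the pressure-type term $\sigma(I_d-\Omega\otimes\Omega)\nabla_x\rho/\rho$ in \eqref{eq:bas_omega}.
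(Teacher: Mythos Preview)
Your proposal is correct and follows the same overall architecture as the paper: Hilbert expansion, identification of the equilibria $\rho M_\Omega$, generalized collision invariants reduced by rotational symmetry to a scalar profile $\chi$ solving the stated degenerate elliptic problem, and moment closure. The one genuine methodological difference is in how the stiff term is eliminated. You test the full nonlinear equation against the $\eps$-dependent invariant $\psi_{\Omega[\fe]}[A]$, using that $\intv{Q(\fe)\psi_{\Omega[\fe]}[A]}=0$ holds identically (the original Degond--Motsch device), and then pass to the limit. The paper instead computes the linearization $\calL_f=\d Q_f$ and its adjoint, \emph{defines} collision invariants as $\ker\calL_f^\star$, proves as a separate result (their Theorem~\ref{EquiCollInv}) that this kernel coincides with the GCI set, and then works at the level of the $O(1)$ equation $\partial_t f+v\cdot\nabla_x f=\calL_f(f^1)$. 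Both routes land on the same moment identity $\intv{\psi_\Omega(\partial_t f+v\cdot\nabla_x f)}=0$ with $f=\rho M_\Omega$, so the closing computation is identical; the paper's detour buys a clean functional-analytic characterization of the invariants (and the observation that $\dim(\ker\calL_f^\star/\mathrm{const})=d-1$ via the spectral decomposition of the pressure tensor $\calmo$), while your direct approach is shorter. A minor point: the paper obtains the equation for $\chi$ variationally, by restricting the energy $J_{E_i}$ to functions of the form $h(v\cdot\Omega/|v|,|v|)\,(v\cdot E_i)/|P_f v|$, rather than by your proposed direct change of variables; and one small imprecision in your sketch is that the $\nabla_x\rho$ contribution does \emph{not} enter through $\partial_\Omega M_\Omega$ but survives directly from $(v\cdot\nabla_x\rho)M_\Omega$ tested against $\psi_\Omega[A]$---that is exactly the origin of the pressure term $\sigma(I_d-\Omega\otimes\Omega)\nabla_x\rho/\rho$.
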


Our article is organized as follows. First, in Section \ref{sec:preliminaries} we state auxiliary results allowing us to discuss the kernel of the collision operator. Then in Section \ref{CharCollInv} we concentrate on the characterization of the 
collision invariants. We prove that the generalized collision invariants introduced in \cite{DM08} coincide with the kernel of a suitable linearised collision operator. We explicitly describe the collision invariants in Section \ref{IdentCollInv} and investigate their symmetries. 
Finally, the limit fluid model is determined in Section \ref{Hydro} and we analyse its main properties. 

\section{Preliminaries}\label{sec:preliminaries}

Plugging into \eqref{eq:main} the Hilbert expansion 
\[
\fe = \fz + \eps \fo  + \ldots \, ,
\]
we obtain at the leading order 
\begin{equation}
\label{eq:0exp}
Q(\fz) = 0 \, ,
\end{equation}
whereas to the next order we get
\begin{equation}
\label{eq:1exp}
\partial_t \fz + v \cdot \nabla_x \fz = \lime \frac{1}{\eps}\{Q(\fe) - Q(\fz)\}= \d Q_f (\fo) =: \calL _f ( \fo) \, ,
\end{equation} 
where $\d Q_f$ denotes the first variation of $Q$ with respect to $f$. The constraint \eqref{eq:0exp} leads immediately to the equilibrium
\[
\mo (v) = \frac{1}{\zo}\exp \left (- \frac{\Phio(v)}{\sigma}  
\right ) \, , \quad \text{  with } \zo = \intvp{\exp \left ( - \frac{\Phio(\vp)}{\sigma}  
\right )} \, ,
\]
where
\begin{equation}\label{eq:pot}
 \Phio (v) = \frac{|v - \Omega |^2}{2} + V(|v|) \, .
\end{equation}

Indeed, by using the identity
\begin{equation}\label{eq:grad_m}
\nabla _v \mo = - \frac{\mo (v)}{\sigma} \nabla _v \Phio = - \frac{\mo (v)}{\sigma} (v - \Omega + \nabla _v V(|v|)\;) \, ,
\end{equation}
we can recast the operator $Q$ as
\begin{align*}
\label{eq:Qiden}
Q(\fz) & = \Divv \left ( \sigma \nabla _v \fz + \fz \nabla _v \Phiof \right ) = \sigma \Divv \left [\mof \nabla _v \left ( \frac{\fz}{\mof} \right ) \right ] \, .
\end{align*}
We denote by $\sphere$ the set of unit vectors in $\R^d$. For any $\Omega \in \sphere$, we consider the weighted spaces
\[
\ltmo = \bigg\{ \chi : \R^d \to \R \mbox{ measurable },\;\intv{(\chi (v))^2 \mo(v)} < \infty \bigg\} \, ,  
\]
and
\[
\homo = \bigg\{ \chi : \R^d \to \R \mbox{ measurable },\;\intv{[\;(\chi (v))^2 + |\nabla _v \chi |^2\;]\mo(v)} < \infty \bigg\} \, .
\]
The nonlinear operator $Q$ should be understood in the distributional sense, and is defined for any particle density $f = f(v)$ in the domain
\begin{align*}
D(Q) & = \Big\{ f : \R^d \to \R_+ \mbox{ measurable }, \; f/\mof \in H^1_{\mof} \Big\} \\
& = \left \{ f : \R^d \to \R_+ \mbox{ measurable },\; \intv{\left \{\left ( \frac{f}{\mof } \right ) ^2  + \left |\nabla _v  \left ( \frac{f}{\mof} \right )  \right |^2 \right \}\mof (v)} < \infty\right \}.
\end{align*}
We introduce the usual scalar products 
\[
(\chi, \theta)_{\mo} = \intv{\chi (v) \theta (v) \mo (v)},\;\;\chi, \theta \in \ltmo \, ,
\]
\[
((\chi, \theta))_{\mo} = \intv{( \chi (v) \theta (v) + \nabla _v \chi \cdot \nabla _v \theta) \mo (v)},\;\;\chi, \theta \in \homo \, ,
\]
and we denote by $|\cdot |_{\mo}, \|\cdot \|_{\mo}$ the associated norms. We make the following hypotheses on the potential $V$. We assume that for any $\Omega \in \sphere$ we have
\begin{equation}
\label{eq:Z} Z_\Omega = \intv{\exp\left (- \frac{1}{\sigma} \left [ \frac{|v - \Omega|^2}{2} + V(|v|)\right ]\right )} < \infty \, .
\end{equation}
Clearly \eqref{eq:Z} holds true for the potentials $V_{\alpha, \beta}$. Notice that in that case $1 \in \ltmo$ and $|1|_{\mo} = 1$ for any $\Omega \in \sphere$. Moreover, we need a Poincar\'e inequality, that is, for any $\Omega \in \sphere$ there is $\lambda _\Omega >0$ such that for all $\chi \in \homo$ we have
\begin{equation}
\label{eq:Poincare}
\sigma \intv{|\nabla _v \chi |^2 \mo (v)} \geq \lambda _\Omega \intv{\left |\chi (v) - \intvp{\chi (\vp) \mo (\vp)}\right |^2 \mo (v)} \, .
\end{equation}
A sufficient condition for \eqref{eq:Poincare} to hold comes from the well-known equivalence between the 
Fokker-Planck and Schr\"odinger operators (see for instance \cite{BonCarGouPav16}). Namely, for any $\Omega \in \sphere$ we have
\[
- \frac{\sigma}{\sqrt{\mo}} \Divv\left ( \mo \nabla _v \left ( \frac{u}{\sqrt{\mo}}\right ) \right ) = - \sigma \Delta _v u + \left [ \frac{1}{4\sigma} |\nabla _v \Phio |^2 - \frac{1}{2} \Delta _v \Phio \right ] u \, .
\]
The operator $\mathcal{H}_\Omega = - \sigma \Delta _v + \left [ \frac{1}{4\sigma} |\nabla _v \Phio |^2 - \frac{1}{2} \Delta _v \Phio \right ]$ is defined in the domain
\[
D(\mathcal{H}_\Omega) = \left \{u \in L^2 (\R^d),\;\left [ \frac{1}{4\sigma} |\nabla _v \Phio |^2 - \frac{1}{2} \Delta _v \Phio \right ]u \in L^2 (\R^d)  \right \} \, .
\]
Using classical results for Schr\"odinger operators (see for instance Theorem XIII.67 in \cite{ReedSimon78}), we have a spectral 
decomposition of the operator $\mathcal{H}_\Omega$ under suitable confining assumptions.
\begin{lemma}
\label{RSVol4}
Assume that for $\Phio$ defined in \eqref{eq:pot} the function $v \to \frac{1}{4\sigma} |\nabla _v \Phio |^2 - \frac{1}{2}\Delta _v \Phio$ satisfies the following:
\begin{itemize}
 \item[a)] it belongs to $L^1_{\mathrm{loc}}(\R^d)$,
 \item[b)] it is bounded from below,
 \item[c)] \[
\lim _{|v| \to \infty} \left [ \frac{1}{4\sigma} |\nabla _v \Phio |^2 - \frac{1}{2}\Delta _v \Phio \right ]=  \infty \, .
\]
\end{itemize}
Then $\mathcal{H}_\Omega ^{-1}$ is a self adjoint compact operator in $L^2(\R^d)$ and $\mathcal{H}_\Omega$ admits a spectral 
decomposition, that is a nondecreasing sequence of real numbers $(\lambda _\Omega ^n)_{n\in \N}$, $\lim _{n \to \infty} \lambda _\Omega ^n = \infty$, 
and a $L^2(\R^d)$-orthonormal basis $(\psi _\Omega ^n)_{n \in \N}$ such that $\mathcal{H}_\Omega \psi _\Omega ^n = \lambda _\Omega ^n \psi _\Omega ^n, n \in \N$, $\lambda _\Omega ^0 = 0$, $\lambda _\Omega ^1 >0$. 
\end{lemma}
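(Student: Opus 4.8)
The plan is to recognize Lemma \ref{RSVol4} as essentially a restatement of the standard spectral theory for Schr\"odinger operators with confining potentials (Reed--Simon, Theorem XIII.67), adapted to the unitary equivalence between the Fokker--Planck operator on $\ltmo$ and the Schr\"odinger operator $\mathcal{H}_\Omega$ on $L^2(\R^d)$ exhibited just before the statement. First I would record that under hypotheses a)--c) the operator $\mathcal{H}_\Omega = -\sigma \Delta_v + W_\Omega$, with $W_\Omega(v) = \frac{1}{4\sigma}|\nabla_v \Phio|^2 - \frac{1}{2}\Delta_v \Phio$, is essentially self-adjoint on $C_c^\infty(\R^d)$ and, by b), bounded from below; after shifting by a constant one may assume $W_\Omega \geq 1$, so $\mathcal{H}_\Omega$ is invertible with bounded inverse. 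Then c) is exactly the hypothesis that makes the embedding of the form domain $\{u \in H^1(\R^d) : \int W_\Omega |u|^2 < \infty\}$ into $L^2(\R^d)$ compact (this is the classical "potential tending to infinity forces discrete spectrum" argument: split $\R^d$ into a large ball, where Rellich applies, and its complement, where the weight $W_\Omega$ controls the tail uniformly). Compactness of the embedding gives compactness of $\mathcal{H}_\Omega^{-1}$ as a self-adjoint operator on $L^2(\R^d)$, and the spectral theorem for compact self-adjoint operators then yields the orthonormal basis $(\psi_\Omega^n)_{n\in\N}$ of eigenfunctions with eigenvalues $\lambda_\Omega^n \nearrow \infty$.

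Next I would pin down the bottom of the spectrum. By construction $\sqrt{\mo} \in L^2(\R^d)$ (this is precisely \eqref{eq:Z}), and the intertwining identity displayed before the lemma shows $\mathcal{H}_\Omega(\sqrt{\mo}) = -\frac{\sigma}{\sqrt{\mo}}\Divv(\mo \nabla_v 1) = 0$; hence $0$ is an eigenvalue with eigenfunction $\sqrt{\mo} > 0$. Since the ground state of a Schr\"odinger operator is strictly positive and simple (Perron--Frobenius / the fact that a nonnegative $L^2$ eigenfunction must be the lowest one), we get $\lambda_\Omega^0 = 0$ is the smallest eigenvalue, it is simple, and therefore $\lambda_\Omega^1 > 0$. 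Translating back through the unitary map $u \mapsto u/\sqrt{\mo}$ from $L^2(\R^d)$ to $\ltmo$ identifies $\lambda_\Omega^1$ with the Poincar\'e constant $\lambda_\Omega$ appearing in \eqref{eq:Poincare}, which is the point of the whole construction.

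I should also note the routine verification that hypotheses a)--c) are genuinely met for the potentials $V = V_{\alpha,\beta}$ of interest (and, in the statement's generality, are simply assumed): since $\Phio(v) = \frac{1}{2}|v-\Omega|^2 + V(|v|)$ is smooth with polynomially growing derivatives, $W_\Omega$ is locally integrable and, for $V_{\alpha,\beta}$, behaves like $\frac{\beta^2}{4\sigma}|v|^6$ at infinity, so b) and c) hold. The main obstacle, such as it is, is not any deep new estimate but rather carefully citing and assembling the correct pieces of Schr\"odinger spectral theory — in particular the compactness-of-resolvent criterion under c) and the simplicity/positivity of the ground state — and checking that the domain described for $\mathcal{H}_\Omega$ in the excerpt is the right one (i.e. that $\mathcal{H}_\Omega$ on that domain is self-adjoint and coincides with the form operator), so that Theorem XIII.67 of \cite{ReedSimon78} applies verbatim. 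Everything else is bookkeeping through the unitary equivalence.
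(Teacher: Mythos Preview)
Your proposal is correct and aligns with the paper's approach: the paper does not supply a proof of this lemma at all, simply citing Theorem~XIII.67 of \cite{ReedSimon78} as a classical result and then remarking that the spectral gap of $\mathcal{H}_\Omega$ is the Poincar\'e constant and that the hypotheses hold for $V_{\alpha,\beta}$. Your write-up is in fact a faithful unpacking of exactly that citation, together with the ground-state identification via $\sqrt{\mo}$ and the unitary equivalence already displayed before the lemma, so there is nothing to add or correct.
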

Let us note that the spectral gap of the Schr\"odinger operator $\mathcal{H}_\Omega$ is the Poincar\'e 
constant in the Poincar\'e inequality \eqref{eq:Poincare}. Notice also that the hypotheses in 
Lemma \ref{RSVol4} are satisfied by the potentials $V_{\alpha, \beta}$, 
and therefore \eqref{eq:Poincare} holds true in that case. It is easily seen that the set of 
equilibrium distributions of $Q$ is parametrized by $d$ parameters as stated in the following result. 
\begin{lemma}
\label{Equilibrium}
Let $f = f(v)\geq 0$ be a function in $D(Q)$. Then $f$ is an equilibrium for $Q$ if and only if there are 
$(\rho, \Omega) \in \R_+ \times \left ( \sphere \cup \{0\} \right )$ such that $f = \rho \mo$. Moreover 
we have $\rho = \rho[f]: = \intv{f(v)}$ and $\Omega = \Omega[f]$. 
\end{lemma}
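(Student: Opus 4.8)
The plan is to prove the two implications separately. For the direction ``$f=\rho\mo\Rightarrow f$ equilibrium'' the only nontrivial point is to identify $\Omega[f]$. Suppose $f=\rho\mo$ with $(\rho,\Omega)\in\R_+\times(\sphere\cup\{0\})$. If $\rho=0$ then $f\equiv 0$, which is trivially an equilibrium, and $\rho[f]=0=\rho$, $\Omega[f]=0=\Omega$. If $\rho>0$, I would first compute $\intv{\mo(v)\,v}$: for every rotation $R$ fixing $\Omega$ one has $\Phio(Rv)=\Phio(v)$, since $|Rv-\Omega|=|R(v-\Omega)|=|v-\Omega|$ and $|Rv|=|v|$, so $\intv{\mo(v)\,v}$ is invariant under all such $R$ and hence equals $\big(\intv{\mo(v)\,(v\cdot\Omega)}\big)\,\Omega$. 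Writing $v=c\,\Omega+w$ with $c=v\cdot\Omega$ and $w\perp\Omega$, one has $\Phio(v)=\tfrac12(c-1)^2+\tfrac12|w|^2+V(\sqrt{c^2+|w|^2})$, so for each fixed $w$ the quantity $\exp(-\Phio(c\Omega+w)/\sigma)$ is strictly larger at $c$ than at $-c$ when $c>0$ (because $(c-1)^2<(c+1)^2$ and $V$ sees $v$ only through $c^2+|w|^2$); integrating the odd moment then gives $\intv{\mo(v)(v\cdot\Omega)}>0$ when $\Omega\in\sphere$, and it vanishes when $\Omega=0$. In either case $\Omega[f]=\Omega$, so $\mof=\mo$ and therefore $Q(f)=\sigma\,\Divv\!\big[\mo\,\nabla_v(\rho\mo/\mo)\big]=\sigma\,\Divv\!\big[\mo\,\nabla_v\rho\big]=0$ by the identity for $Q$ recorded just before the statement; also $\rho[f]=\rho\intv{\mo}=\rho$ since $\mo$ is a probability density on $\R^d$.

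For the converse, let $f\in D(Q)$ with $Q(f)=0$ and set $g:=f/\mof\in H^1_{\mof}$. Using $Q(f)=\sigma\,\Divv[\mof\,\nabla_v g]$, the equation $Q(f)=0$ reads $\intv{\mof\,\nabla_v g\cdot\nabla_v\varphi}=0$ for all smooth compactly supported $\varphi$, and by density this extends to $\varphi=g$, giving $\sigma\intv{\mof\,|\nabla_v g|^2}=0$. Hence $\nabla_v g=0$ a.e.\ on the connected set $\R^d$, so $g$ is a constant $\rho$, necessarily $\geq 0$ because $f\geq 0$ and $\mof>0$. Thus $f=\rho\mof=\rho\,M_{\Omega[f]}$; taking $\Omega:=\Omega[f]\in\sphere\cup\{0\}$ puts $f$ in the claimed form, with $\Omega=\Omega[f]$ by construction and $\rho=\rho[f]$ upon integrating in $v$.

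I expect the only genuinely delicate point to be the passage from the distributional identity $Q(f)=0$ to testing it against $g=f/\mof$ itself: one must ensure that $g$, a priori only known to lie in the weighted space $H^1_{\mof}$, is an admissible test function, which is where the confining hypotheses on $V$ enter — continuity and strict positivity of $\mof$, together with density of $C_c^\infty(\R^d)$ in $H^1_{\mof}$ (reached by a cut-off truncation). The strict positivity of $\intv{\mo(v)(v\cdot\Omega)}$ for $\Omega\in\sphere$ is a minor secondary point, settled by the $c\mapsto-c$ symmetrization above; everything else is routine.
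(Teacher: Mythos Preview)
Your proposal is correct and follows essentially the same approach as the paper's proof: both directions rest on the factorization $Q(f)=\sigma\,\Divv[\mof\nabla_v(f/\mof)]$, the energy identity $\sigma\intv{|\nabla_v(f/\mof)|^2\mof}=0$ forcing $f/\mof$ to be constant, and the identification $\Omega[\rho\mo]=\Omega$ via a symmetry argument (the paper uses the reflections $I_d-2\xi\otimes\xi$ for each $\xi\perp\Omega$, you use all rotations fixing $\Omega$, which yields the same conclusion) together with the strict positivity of $\intv{(v\cdot\Omega)\mo}$ obtained by comparing $\Phio(v)$ with $\Phio(-v)$. Your explicit mention of the density step needed to test the weak equation against $g=f/\mof$ is a technical detail the paper simply takes for granted.
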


\begin{proof}\\
If $f$ is an equilibrium for $Q$, we have
\[
\sigma \intv{\left |\nabla _v \left ( \frac{f}{\mof} \right )\right | ^2 \mof (v)} = 0 \, ,
\]
and therefore there is $\rho \in \R$ such that $f = \rho \mof$. Obviously $\rho = \intv{f(v)}\geq 0$ and $\Omega [f] \in \sphere \cup \{0\}$. Conversely, we claim that for any $(\rho, \Omega)\in \R_+ \times ( \sphere \cup \{0\})$, the particle density $f = \rho \mo$ is an equilibrium for $Q$. Indeed, we have
\[
\sigma \nabla _v (\rho \mo ) + \rho \mo (v - \Omega + \nabla _v V ) = \rho ( \sigma \nabla _v \mo + \mo \nabla _v \Phio ) = 0 \, .
\]
We are done if we prove that $\Omega [f] = \Omega$. If $\Omega = 0$, it is easily seen that 
\[
\intv{f(v)v} = 
\rho\intv{\frac{1}{Z_0} \exp \left ( - \frac{\Phi _0(v)}{\sigma}\right ) v} = \frac{\rho}{Z_0} \intv{\exp\left (- \frac{1}{\sigma}\left ( \frac{|v|^2}{2} + V(|v|)\right)   \right )v} = 0 \, ,
\]
implying $\Omega[f] = 0 = \Omega$. Assume now that $\Omega \in \sphere$. For any $\xi \in \sphere$, $\xi \cdot \Omega = 0$, we consider the orthogonal transformation $\O _\xi = I_d - 2\xi \otimes \xi$. Thanks to the change of variable $v = \O _\xi \vp$, we write
\begin{align*}
\intv{(v\cdot \xi)f(v)} & = \frac{\rho}{\zo}\intv{(v \cdot \xi) \mo (v)} = \frac{\rho}{\zo}\intvp{(\O _\xi \vp \cdot \xi) \mo (\O _\xi \vp)}\\
& = - \frac{\rho}{\zo}\intvp{(\vp \cdot \xi) \mo (\vp) } = - \intvp{(\vp \cdot \xi) f(\vp) } \, ,
\end{align*}
where we have used the radial symmetry of $V$, $\O _\xi \xi = - \xi$ and $\O _\xi \Omega = \Omega$. We deduce that $\intv{f(v)v} = \intv{(v\cdot \Omega) f(v)}\;\Omega$. We claim that $\intv{(v\cdot \Omega) f(v)}>0$. Indeed we have
\begin{align*}
\intv{(v\cdot \Omega) f(v)} & = \frac{\rho}{\zo} \int _{v \cdot \Omega >0} (v\cdot \Omega) \mo (v)\;\mathrm{d}v + \frac{\rho}{\zo} \int _{v \cdot \Omega <0} (v\cdot \Omega) \mo (v)\;\mathrm{d}v\\
& = \frac{\rho}{\zo} \int _{v \cdot \Omega >0}(v\cdot \Omega)\left [\exp\left (- \frac{\Phio(v)}{\sigma} \right ) - \exp \left (- \frac{\Phio(-v)}{\sigma} \right )  \right ] \;\mathrm{d}v \, .
\end{align*}
Obviously, we have for any $v \in \R^d$ such that $v \cdot \Omega>0$
\[
- \frac{\Phio (v)}{\sigma} + \frac{\Phio (-v)}{\sigma}= - \frac{|v - \Omega|^2}{2\sigma} + \frac{|-v - \Omega|^2}{2\sigma} = 2 \frac{v \cdot \Omega}{\sigma} > 0 \, ,
\]
implying that $\intv{(v\cdot \Omega) f(v)} >0$ and
\[
\Omega[f] = \frac{\intv{f(v)v}}{\left | \intv{f(v)v} \right |}= \frac{\intv{\;(v \cdot \Omega) f(v)}\;\Omega}{\left | \intv{\;(v \cdot \Omega)f(v)}\;\Omega \right |} = \Omega \, .
\]
\end{proof}


\section{Characterization of the collision invariants}\label{CharCollInv}
In \cite{DM08}, the following notion of generalized collision invariant (GCI) has been introduced.
\begin{defin} (GCI)\\
\label{GeneralizedCollisionInvariant}
Let $\Omega \in \sphere$ be a fixed orientation. A function $\psi = \psi (v)$ is called a generalized collision 
invariant of $Q$ associated to $\Omega$, if and only if
\[
\intv{Q(f)(v) \psi (v) } = 0 \, ,
\]
for all $f$ such that $(I_d - \Omega \otimes \Omega) \intv{f(v)v} = 0$, that is such that $\intv{f(v)v } \in \R \Omega$. 
\end{defin}

\

In order to obtain the hydrodynamic limit of \eqref{eq:main}, for any fixed $(t,x) \in \R_+ \times \R^d$, we multiply \eqref{eq:1exp} 
by a function $v \to \psi _{t,x} (v)$ and integrate with respect to $v$ yielding
\begin{align}
\label{eq:20}
\intv{\partial _t \fz (t,x,v) \psi _{t,x} (v)} + \intv{v \cdot \nabla _x \fz (t,x,v) \psi _{t,x} (v)} & = \intv{\calL _{f(t,x,\cdot)}(\fo (t,x,\cdot)) \psi _{t,x} (v)}\\
& = \intv{\fo (t,x,v) (\calL _{f(t,x,\cdot)}^\star \psi _{t,x})(v) } \, .\nonumber 
\end{align}
The above computation leads naturally to the following extension of the notion of collision invariant, see also \cite{BosCar15}.
\begin{defin}
\label{CollisionInvariant}
Let $f = f(v)\geq 0$ be an equilibrium of $Q$. A function $\psi = \psi (v)$ is called a collision invariant for $Q$ 
associated to the equilibrium $f$, if and only if $\calL _f ^\star \psi = 0$, that is
\[
\intv{(\calL _f g )(v) \psi (v) } = 0\;\;\mbox{ for any function } g = g(v) \, .
\]
\end{defin}

\

We are looking for a good characterization of the linearized collision operator $\calL _f$ and its adjoint with respect to the leading order particle density $\fz$. Motivated by \eqref{eq:0exp}, 
we need to determine the structure of the equilibria of $Q$ which are given by Lemma \ref{Equilibrium}.\\
By Lemma \ref{Equilibrium}, we know that for any $(t,x) \in \R_+ \times \R^d$, there are $(\rho (t,x), \Omega (t,x)) \in \R_+ \times (\sphere  \cup \{0\})$ such that $\fz(t,x,\cdot) = \rho (t,x) M_{\Omega (t,x)}$, where
\[
\rho (t,x) = \rho [\fz (t,x,\cdot)] \, \, \text{ and } \, \, \Omega (t,x) = \Omega[\fz(t,x,\cdot)] \, .
\]
The evolution of the macroscopic quantities $\rho$ and $\Omega$ follows from \eqref{eq:1exp} and \eqref{eq:20}, by appealing to the moment method \cite{BarGolLevI93,BarGolLevII93,BosDCDS15,BosIHP15,Lev93,Lev96}. 
Next, we explicitly determine the linearization of the collision operator $Q$ around its equilibrium distributions. For any orientation $\Omega \in \sphere \cup \{0\}$ we introduce the pressure tensor 
$$
\calmo := \intv{(v-\Omega) \otimes (I_d - \Omega \otimes \Omega) (v - \Omega) \mo (v) }\, ,
$$ 
and the quantity
$$
c_1 := \intv{\;(v \cdot \Omega) \mo (v)}>0\,.
$$
We will check later, see Lemma \ref{Spectral}, that the pressure tensor $\calmo$ is symmetric.
\begin{proposition}
\label{Linear}
Let $f = f(v) \geq 0$ be an equlibrium distribution of $Q$ with nonvanishing orientation, that is
\[
f = \rho \mo,\;\; \text{ where } \; \; \rho = \rho[f],\;\; \text{ and } \; \; \Omega = \Omega[f] \in \sphere \, .
\]
\begin{enumerate}
\item
The linearization $\calL _f = \d Q_f$ is given by
\[
\calL _f g = \Divv \left \{ \sigma \nabla _v g + g \nabla _v \Phio - \frac{f}{\intv{\;(v\cdot \Omega) f (v)}} P_f \intv{g(v) v } \right \} \, ,
\]
where $\pf := I_d - \Omega[f] \otimes \Omega [f]$ is the orthogonal projection onto $\{\xi \in \R^d : \;\xi \cdot \Omega[f] = 0\}$. In particular $\calL_{\rho \mo} = \calL _{\mo}$. 
\item
The formal adjoint of $\calL _f$ is given by
\begin{equation}
\label{eq:adjoint}
\calL _f ^\star \psi = \sigma \frac{\Divv (\mo \nabla _v \psi)}{\mo} + P_f v \cdot W[\psi],\;\;W[\psi] := \frac{\intv{\mo (v)\nabla _v \psi}}{\intv{\;(v\cdot \Omega)\mo (v)}} \, .
\end{equation}
\item
We have the identity
\[
\calL _f (f(v-\Omega)) = \sigma \nabla _v f - \Divv 
\left ( f \frac{\calmo}{c_1}\right ).
\]
Note that $\Divv$ refers to the divergence operator acting on matrices defined as applying the divergence operator over rows.
\end{enumerate}
\end{proposition}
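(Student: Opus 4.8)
The plan is to establish the three identities by direct differentiation of the collision operator $Q$, using repeatedly the identity \eqref{eq:grad_m} for $\nabla_v \mo$ and the explicit structure of $\Omega[f]$. For part~1, I would write $Q(f+\tau g)$ for a perturbation $g$ and compute $\frac{d}{d\tau}\big|_{\tau=0}$. The term $\Divv\{\sigma \nabla_v f + \eta f \nabla_v V\}$ is linear in $f$, so it contributes $\Divv\{\sigma \nabla_v g + \eta g \nabla_v V\}$ immediately. The only nonlinear piece is $\Divv\{f(v-\Omega[f])\} = \Divv\{fv\} - \Divv\{f\,\Omega[f]\}$, so the heart of the computation is differentiating $f\mapsto \Omega[f]$. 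Since $\Omega[f] = \rho u[f]/|\rho u[f]|$ with $\rho u[f] = \intv{f(\cdot,v)v}$, the chain rule gives $\d\Omega_f \cdot g = \tfrac{1}{|\rho u[f]|}(I_d - \Omega\otimes\Omega)\intv{g(v)v} = \tfrac{1}{|\rho u[f]|} P_f \intv{g(v)v}$, where $P_f = I_d - \Omega\otimes\Omega$ is the stated projector. At the equilibrium $f = \rho\mo$ we have $|\rho u[f]| = \rho\,c_1 = \intv{(v\cdot\Omega)f(v)}$ by Lemma~\ref{Equilibrium} and the definition of $c_1$, so the linearization of $-\Divv\{f\,\Omega[f]\}$ contributes the term $-\Divv\{ f\, \d\Omega_f\cdot g\} = -\Divv\{\tfrac{f}{\intv{(v\cdot\Omega)f(v)}} P_f \intv{g(v)v}\}$ (the contribution $-\Divv\{g\,\Omega\}$ combines with $\Divv\{gv\}$ to give $\Divv\{g(v-\Omega)\}$, which together with the $\sigma$ and $V$ terms assembles into $\Divv\{\sigma\nabla_v g + g\nabla_v\Phio\}$ via \eqref{eq:pot}). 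This yields the claimed formula, and the scale invariance $\calL_{\rho\mo} = \calL_\mo$ follows because $\rho$ factors out of $f$ in both the $\Divv\{g\nabla_v\Phio\}$ term (which does not see $f$ at all) and the ratio $f/\intv{(v\cdot\Omega)f(v)}$.

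For part~2, I would compute the formal adjoint term by term against the $\ltmo$-free pairing $\intv{(\calL_f g)\psi}$ with Lebesgue measure. Integrating by parts, $\intv{\Divv\{\sigma\nabla_v g + g\nabla_v\Phio\}\psi} = -\intv{(\sigma\nabla_v g + g\nabla_v\Phio)\cdot\nabla_v\psi}$; writing $\sigma\nabla_v g + g\nabla_v\Phio = \sigma\mo\nabla_v(g/\mo)$ via \eqref{eq:grad_m} and integrating by parts once more gives $\sigma\intv{(g/\mo)\,\Divv(\mo\nabla_v\psi)} = \intv{g\cdot \sigma\,\Divv(\mo\nabla_v\psi)/\mo}$, the first term of \eqref{eq:adjoint}. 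For the nonlocal term, $-\Divv\{\tfrac{f}{c_1\rho} P_f \intv{g(v')v'}\}$ paired with $\psi$ and integrated by parts gives $\tfrac{1}{c_1\rho}\big(P_f\intv{g(v')v'}\big)\cdot\intv{\nabla_v\psi\, f(v)} = \intv{g(v)\, P_f v}\cdot \tfrac{\intv{\mo\nabla_v\psi}}{\intv{(v\cdot\Omega)\mo}}$, using $f=\rho\mo$, the self-adjointness of $P_f$, and $\intv{(v\cdot\Omega)f} = \rho c_1$; this is $\intv{g(v)\, P_f v\cdot W[\psi]}$, the second term. Collecting both gives $\calL_f^\star\psi$ as stated.

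For part~3, I would simply substitute $g(v) = f(v)(v-\Omega)$ into the formula from part~1. The leading term is $\Divv\{\sigma\nabla_v(f(v-\Omega)) + f(v-\Omega)\nabla_v\Phio\}$; using $\sigma\nabla_v f + f\nabla_v\Phio = 0$ (equilibrium identity, from \eqref{eq:grad_m} since $f=\rho\mo$) this collapses: $\sigma\nabla_v(f(v-\Omega)) + f(v-\Omega)\nabla_v\Phio = \sigma f\,\nabla_v(v-\Omega) + (v-\Omega)\otimes(\sigma\nabla_v f + f\nabla_v\Phio) = \sigma f\, I_d$, so the first term equals $\Divv(\sigma f\, I_d) = \sigma\nabla_v f$. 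For the nonlocal term, $\intv{g(v)v} = \intv{f(v)(v-\Omega)\otimes v}$; one checks (again by the equilibrium symmetry used in Lemma~\ref{Equilibrium}, i.e.\ the reflection $\O_\xi$) that $\intv{f(v)(v-\Omega)\otimes\Omega}$ contributes only along $\Omega$, so $P_f\intv{f(v)(v-\Omega)\otimes v} = P_f\intv{f(v)(v-\Omega)\otimes(v-\Omega)} = \calmo$ (recalling $\calmo$ already carries a $P_f$ on the right, and that the matrix is symmetric as promised in Lemma~\ref{Spectral}). Dividing by $\intv{(v\cdot\Omega)f(v)} = \rho c_1$... here I should be careful: with $f=\rho\mo$, $\intv{(v\cdot\Omega)f} = \rho c_1$ but $\calmo$ as defined uses $\mo$ not $f$, so the ratio is $f/(\rho c_1)\cdot \rho\,\calmo = f\calmo/c_1$. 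Hence the nonlocal term is $-\Divv(f\,\calmo/c_1)$, and adding the two pieces gives exactly $\calL_f(f(v-\Omega)) = \sigma\nabla_v f - \Divv(f\,\calmo/c_1)$.

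The main obstacle is bookkeeping rather than conceptual: carefully tracking the projector $P_f$ and the tensor/divergence conventions (divergence acting row-wise on matrices), and justifying that the reflection-symmetry argument from Lemma~\ref{Equilibrium} lets one replace $\intv{f(v)(v-\Omega)\otimes v}$ by $\intv{f(v)(v-\Omega)\otimes(v-\Omega)}$ up to a term killed by $P_f$. One must also keep straight which objects are built from $\mo$ versus $f=\rho\mo$ so that the factors of $\rho$ cancel correctly, and ensure all integrations by parts are legitimate for $g$ in the stated domain $D(Q)$ and $\psi$ smooth with the appropriate growth — but these are routine given the confining hypotheses on $V$ and the Poincar\'e inequality \eqref{eq:Poincare}.
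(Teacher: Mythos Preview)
Your proposal is correct and follows essentially the same approach as the paper's proof: direct differentiation of $Q(f+sg)$ for part~1, integration by parts for part~2, and substitution together with the equilibrium identity $\sigma\nabla_v f + f\nabla_v\Phio = 0$ for part~3. One minor simplification in part~3: the reflection-symmetry detour you invoke is unnecessary, since by definition $\calmo = \intv{(v-\Omega)\otimes (I_d-\Omega\otimes\Omega)(v-\Omega)\,\mo} = \intv{(v-\Omega)\otimes P_f v\,\mo}$ (because $P_f\Omega = 0$), so the nonlocal matrix $\intvp{\mo(\vp)(\vp-\Omega)\otimes P_f \vp}$ is already $\calmo$ on the nose, without appealing to Lemma~\ref{Spectral} or any reflection argument.
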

\begin{proof}$\;$\\
1. By standard computations we have
\[
\calL _f g  = \frac{\d}{\d s} \Big|_{s= 0} Q(f+sg) = \Divv \left \{\sigma \nabla _v g + g (v - \Omega[f] + \nabla _v V) - f \frac{\d}{\d s}\Big|_{s= 0} \Omega[f+sg]  \right \} \, ,
\]
and
\[
\frac{\d}{\d s}\Big|_{s= 0}\Omega [f+sg] = \frac{(I_d - \Omega[f] \otimes \Omega[f])}{\left | \intv{f(v) v }\right | } \intv{ g(v)v} \, .
\]
Therefore we obtain
\[
\calL _f g = \Divv \left \{ \sigma \nabla _v g + g \nabla _v \Phio - \frac{f}{\intv{\;(v\cdot \Omega) f (v)}} P_f \intv{g(v) v } \right \} \, . 
\]
2. We have
\begin{align*}
&\intv{(\calL _f g) (v)\psi(v)  } \\
& \qquad = -  \intv{\left \{ \sigma \nabla _v g + g \nabla _v \Phio - \frac{f}{\intvp{\;(\vp\cdot \Omega) f (\vp)}} P_f \intvp{g(\vp) \vp } \right \}\cdot \nabla _v \psi }\\
& \qquad = \intv{g[ \sigma \Divv \nabla _v \psi - \nabla _v \psi \cdot \nabla _v \Phio]} + \intvp{g(\vp) \pf \vp \cdot \frac{\intv{f(v) \nabla _v \psi(v)}}{\intv{\;(v\cdot \Omega) f(v)}}} \, ,
\end{align*}
implying 
\[
\calL _f ^\star \psi = \sigma \frac{\Divv (\mo \nabla _v \psi)}{\mo} + P_f v \cdot W[\psi] \, .
\]
3. For any $i \in \{1,...,d\}$ we have
\begin{align*}
\calL _f (f(v-\Omega)_i)& = \Divv \left \{(v - \Omega)_i (\sigma \nabla _v f + f \nabla _v \Phio) + \sigma f e_i - f\frac{\intvp{\mo(\vp) (\vp - \Omega)_i \pf \vp}}{\intvp{\;(\vp \cdot \Omega)\mo (\vp)}}   \right \} \, ,
\end{align*}
and therefore, since $f = \rho M_\Omega$ satisfies $\sigma \nabla _v f + f \nabla _v \Phio = 0$, we get
\begin{align*}
\calL _f (f(v-\Omega)) & = \sigma \nabla _v f - \Divv \left ( f \frac{\intvp{\mo (\vp) (\vp - \Omega) \otimes P_f \vp}}{\intvp{\;(\vp \cdot \Omega) \mo (\vp)}} \right ) \\
& = \sigma \nabla _v f - \Divv\left ( f \frac{\calmo}{c_1}\right ).
\end{align*}
\end{proof}

\
Notice that at any $(t,x) \in \R_+ \times \R^d$, the function $h = 1$ is a collision invariant for $Q$, associated to $f(t,x,\cdot)$. Indeed, for any $g = g(v)$ we have
\[
\intv{Q(f(t,x,\cdot) + sg)} = 0 \, ,
\]
implying that $\intv{\;(\calL _{f(t,x,\cdot)}g)(v)} = 0$ and therefore 
$\calL _{f(t,x, \cdot)} ^\star 1 = 0$, for all $(t,x) \in \R_+ \times \R^d$. Once we have determined a collision 
invariant $\psi = \psi (t,x,v)$ at any $(t,x) \in \R_+ \times \R^d$, we deduce, thanks to \eqref{eq:20}, 
a balance for the macroscopic quantities $\rho (t,x) = \rho[f(t,x,\cdot)]$ and $\Omega (t,x) = \Omega[f(t,x,\cdot)]$,
given by the relationship
\begin{equation}
\label{eq:balance} 
\intv{\partial _t (\rho M_{\Omega(t,x)}) \psi (t,x,v) } + \intv{v \cdot \nabla _x (\rho M_{\Omega(t,x)}) \psi (t,x,v) } = 0 \, .
\end{equation}
When taking as collision invariant the function $h(t,x,v) = 1$, we obtain the local mass conservation equation
\begin{equation}
\label{eq:ContEqu}
\partial _t \rho + \Divx \left ( \rho \intv{\;(v \cdot \Omega (t,x)) M_{\Omega (t,x)}(v)} \;\Omega \right ) = 0 \, .
\end{equation}
As usual, we are looking also for the conservation of the total momentum, however, the 
nonlinear operator $Q$ does not preserve 
momentum. In other words, $v$ is not a collision invariant. Indeed, if 
$f = \rho \mo$ is an equilibrium with nonvanishing orientation, we have
\[
\calL _f ^\star v = \sigma \frac{\nabla _v \mo}{\mo} + \frac{\pf v}{\intvp{\;(\vp \cdot \Omega) \mo (\vp)}} = - \nabla _v \Phio +  \frac{\pf v}{\intvp{\;(\vp \cdot \Omega) \mo (\vp)}} \, ,
\]
and therefore $v$ is not a collision invariant. 

We concentrate next on the resolution of \eqref{eq:adjoint}. We will use the notation $\partial_v \xi =\left(\tfrac{\partial \xi_i}{\partial v_j}\right)$ for the Jacobian matrix of a vector field $\xi$ and
$ \Divv$ for the divergence operator in $v$ of both vectors and matrices with the convention of taking the divergence over the rows of the matrix. With this convention, we have
\begin{equation}
\label{EquIntByParts}
\intv{g \, \Divv A} = - \intv{A \, \nabla_v g} \qquad \mbox{and} \qquad
\intv{\xi \, \Divv \eta} = - \intv{\partial_v \xi \, \eta}
\end{equation}
for all smooth functions $g$, vector fields $\xi,\eta$, and matrices $A$. 
We now focus in finding a parameterization of the kernel of the operator $\calL _f ^\star$.
\begin{lemma}
\label{AdjointRes}
Let $f = \rho \mo$ be an equilibrium of $Q$ with nonvanishing orientation. 
The following two statements are equivalent:
\begin{enumerate}
\item
$\psi = \psi (v)$ is a collision invariant for $Q$ associated to $f$.
\item
$\psi$ satisfies
\begin{equation}
\label{eq:22}\sigma \frac{\Divv (\mo \nabla _v \psi)}{\mo} + \pf v \cdot W = 0 \, ,
\end{equation}
for some vector $W \in \ker (\calmo - \sigma c_1 I_d)$. 
\end{enumerate}
Moreover, the linear map $W : \ker (\calL _f ^\star) \to \ker (\calmo - \sigma c_1 I_d)$, with $W[\psi] := \intv{\mo (v) \nabla _v \psi} / c_1$ induces an isomorphism between the vector spaces $\ker (\calL _f ^\star)/\ker W$ and $\ker (\calmo - \sigma c_1 I_d)$, where $\ker W$ is the set of the constant functions.
\end{lemma}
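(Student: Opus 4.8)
\emph{Plan.} I would base the whole argument on the explicit formulas of Proposition \ref{Linear}: the representation $\calL_f^\star \psi = \sigma\,\Divv(\mo\nabla_v\psi)/\mo + \pf v\cdot W[\psi]$ with $W[\psi] = \tfrac{1}{c_1}\intv{\mo\nabla_v\psi}$, the identity $\calL_f(f(v-\Omega)) = \sigma\nabla_v f - \Divv(f\,\calmo/c_1)$, and the elementary matrix identities $\calmo = T\pf$, with $T := \intv{(v-\Omega)\otimes(v-\Omega)\,\mo(v)}$ (immediate because $\pf$ is symmetric), and $\calmo\pf = \calmo$. The key point, which I would establish first, is that for $\psi\in\homo$ the vector $W[\psi]$ is forced to be an eigenvector of $\calmo$ for the eigenvalue $\sigma c_1$ as soon as $\psi\in\ker(\calL_f^\star)$. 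For $(1)\Rightarrow(2)$: being a collision invariant, $\psi$ satisfies $\intv{(\calL_f g)\psi}=0$ for every $g$, so testing the identity $\calL_f(f(v-\Omega)) = \sigma\nabla_v f - \Divv(f\,\calmo/c_1)$ componentwise against $\psi$ and integrating by parts once (using $f=\rho\mo$, that $\calmo$ is a constant matrix, and the convention \eqref{EquIntByParts}) yields $-\sigma\rho c_1 W[\psi] + \rho\,\calmo W[\psi]=0$, whence (as $\rho>0$) $W[\psi]\in\ker(\calmo-\sigma c_1 I_d)$; so \eqref{eq:22} holds with $W=W[\psi]$.

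For $(2)\Rightarrow(1)$, I would assume \eqref{eq:22} with some $W\in\ker(\calmo-\sigma c_1 I_d)$, rewrite it as $\sigma\,\Divv(\mo\nabla_v\psi)=-\mo\,(v\cdot\pf W)$, multiply by $(v-\Omega)_i$ and integrate by parts. The left-hand side produces $-\sigma c_1(W[\psi])_i$; on the right-hand side, since $\pf W\perp\Omega$ one has $v\cdot\pf W=(v-\Omega)\cdot\pf W$, so it equals $-(T\pf W)_i=-(\calmo W)_i=-\sigma c_1 W_i$. Hence $W[\psi]=W$, and then $\calL_f^\star\psi=\sigma\,\Divv(\mo\nabla_v\psi)/\mo+\pf v\cdot W=0$ by \eqref{eq:22} again, which closes the equivalence of (1) and (2).

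For the final assertion, well-definedness of $W:\ker(\calL_f^\star)\to\ker(\calmo-\sigma c_1 I_d)$ follows from $(1)\Rightarrow(2)$ and linearity is clear from the formula for $W[\cdot]$. If $\psi\in\ker(\calL_f^\star)$ with $W[\psi]=0$, then \eqref{eq:22} reduces to $\Divv(\mo\nabla_v\psi)=0$; pairing with $\psi$ and integrating by parts gives $\intv{\mo|\nabla_v\psi|^2}=0$, so $\psi$ is constant, while conversely every constant lies in $\ker(\calL_f^\star)$ (since $\calL_f^\star 1=0$) and is annihilated by $W$; hence $\ker W$ is precisely the set of constants. For surjectivity, given $W\in\ker(\calmo-\sigma c_1 I_d)$ I would solve the weighted elliptic problem $-\sigma\,\Divv(\mo\nabla_v\psi)=\mo\,(v\cdot\pf W)$ in $\homo$, uniquely up to an additive constant, by Lax--Milgram with coercivity provided by the Poincar\'e inequality \eqref{eq:Poincare} (equivalently, by inverting $-\sigma\,\Divv(\mo\nabla_v\,\cdot)/\mo$ on the orthogonal complement of the constants via Lemma \ref{RSVol4}); the solvability condition holds because $\intv{\mo\,(v\cdot\pf W)}=c_1\,\Omega\cdot\pf W=0$, using $\intv{\mo v}=c_1\Omega$ (as in the proof of Lemma \ref{Equilibrium}) and $\pf W\perp\Omega$. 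Such a $\psi$ satisfies \eqref{eq:22} with this $W$, hence belongs to $\ker(\calL_f^\star)$ by $(2)\Rightarrow(1)$, and the computation carried out there gives $W[\psi]=W$, so $W$ is onto. Together with $\ker W=\{\text{constants}\}$, this yields the isomorphism $\ker(\calL_f^\star)/\ker W\simeq\ker(\calmo-\sigma c_1 I_d)$.

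The main obstacle is organizational rather than conceptual: keeping the ``divergence over rows'' convention \eqref{EquIntByParts} straight when integrating $\Divv(f\,\calmo/c_1)$ by parts, checking that the weighted moments used (finiteness of $\intv{|v|^2\mo}$, so that $v\cdot\pf W\in\ltmo$ and the boundary terms vanish) are available under the standing hypotheses on $V$, and verifying the small matrix identities $\calmo=T\pf$ and $\calmo\pf=\calmo$. One does not, somewhat pleasantly, need the symmetry of $\calmo$ (Lemma \ref{Spectral}) anywhere in this argument.
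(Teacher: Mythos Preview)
Your argument for the equivalence $(1)\Leftrightarrow(2)$ is correct and is essentially the paper's proof: both directions test against $f(v-\Omega)$ (equivalently, use the identity $\calL_f(f(v-\Omega))=\sigma\nabla_v f-\Divv(f\,\calmo/c_1)$) and integrate by parts to extract the relation $\calmo W[\psi]=\sigma c_1 W[\psi]$, respectively $W[\psi]=W$. Your explicit introduction of $T$ and the identity $\calmo=T\pf$ is a cosmetic repackaging of the same computation.

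Where you go beyond the paper is in the ``moreover'' clause: the paper's proof stops after establishing $(1)\Leftrightarrow(2)$ and does not spell out the isomorphism. Your treatment of $\ker W=\{\text{constants}\}$ via the energy identity is fine, and your surjectivity argument (Lax--Milgram on $\thomo$ with the compatibility condition $\intv{\mo\,(v\cdot\pf W)}=c_1\,\Omega\cdot\pf W=0$) is exactly the argument the paper deploys later, in Section~\ref{IdentCollInv}, to solve \eqref{eq:28}. So you are not doing anything different in spirit; you are simply importing that later solvability result into the proof of the lemma itself, which is a reasonable organizational choice.
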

\begin{proof} $\;$\\
1.$\implies$2. Since $\psi$ is a collision invariant associated to $f$, i.e. $\calL _f ^\star \psi = 0$,  and by the third statement in Proposition \ref{Linear} we deduce (using also the first formula in \eqref{EquIntByParts} with $f \calmo /c_1$ and $\psi$)
\begin{align*}
0 & = \intv{\calL _f ^\star \psi  \; f (v - \Omega)} = \intv{\psi (v) \calL _f (f(v-\Omega))} \\
& = \intv{\psi (v) \left [\sigma \nabla _v f - \Divv \left ( f \frac{\calmo}{c_1}\right )   \right ] } = - \sigma \intv{f(v) \nabla _v \psi } + \calmo \intv{\frac{f(v) \nabla _v \psi}{c_1}} \\
& = - \rho \sigma c_1 W[\psi] + \rho \calmo W[\psi] \, .
\end{align*}
Note that if $\rho = 0$ then $f = 0$ and $ \int v f \d v = 0$, implying that $\Omega [ f] = 0$. Hence, 
since $\Omega \neq 0$, we have $\rho > 0$ and thus $W[\psi] \in \ker (\calmo - \sigma c_1 I_d)$, saying that \eqref{eq:22} holds true with $W = W[\psi] \in \ker(\calmo - \sigma c_1 I_d)$.

\

\noindent
2.$\implies$1. Let $\psi$ be a function satisfying \eqref{eq:22} for some vector $W \in \ker (\calmo - \sigma c_1 I_d)$. Multiplying \eqref{eq:22} by $f(v-\Omega)$ and integrating with respect to $v$ yields (thanks to the second formula in \eqref{EquIntByParts}) 
\[
- \sigma \rho\intv{\partial _v (v - \Omega) \nabla _v \psi  \mo (v) } + \rho \calmo W = 0 \, ,
\]
which implies $W[ \psi] = W$ since $\calmo W = \sigma c_1 W$ by the assumption $W \in \ker (\calmo - \sigma c_1 I_d)$. Therefore $\psi$ is a collision invariant for $Q$, associated to $f$
\[
\calL _f ^\star \psi = \sigma \frac{\Divv (\mo \nabla _v \psi)}{\mo} + P_f v \cdot W[\psi] = \sigma \frac{\Divv (\mo \nabla _v \psi)}{\mo} + P_f v \cdot W = 0 \, .
\]
\end{proof}
\begin{remark}
\label{c1}
For any non negative measurable function $\chi = \chi (c,r) : ]-1, +1[ \times ] 0, \infty[ \to \R$ and any $\Omega \in \sphere$, for $d \geq 2$, we have
\begin{equation*}
\label{eq:23}
\intv{\chi \left ( \frac{v \cdot \Omega}{|v|},|v| \right )} = |\mathbb{S}^{d-2}| \int _{\R_+}\intth{\chi(\cos \theta, r) r ^{d-1} \sin ^{d-2} \theta }\d r \, ,
\end{equation*}
where $|\mathbb{S}^{d-2}|$ is the surface of the unit sphere in $\R^{d-1}$, for $d \geq 3$, and $|\mathbb{S}^0| = 2$
for $d = 2$. In particular we have the formula 
\begin{align}
\label{eq:34}
\intv{\chi \left ( \frac{v \cdot \Omega}{|v|},|v| \right )\mo (v)}& = \frac{\int _{\R_+}r^{d-1} \intth{\chi(\cos \theta, r) e(\cos \theta, r) \sin ^{d-2} \theta }\d r}{\int _{\R_+}r^{d-1} \intth{e(\cos \theta, r) \sin ^{d-2} \theta }\d r}\\
& = \frac{\int_{\R_+}r^{d-1} \int _{-1} ^{+1} \chi (c,r) e(c,r) (1 - c^2) ^{\frac{d-3}{2}}\d c \d r}{\int_{\R_+}r^{d-1} \int _{-1} ^{+1}  e(c,r) (1 - c^2) ^{\frac{d-3}{2}}\d c \d r} \, , \nonumber 
\end{align}
where $e(c,r) = \exp(rc/\sigma) \exp ( - (r^2 + 1)/(2\sigma) - V(r)/\sigma )$. 
\end{remark}
Notice that thanks to \eqref{eq:34} the coefficient $c_1$ does not depend upon $\Omega \in \sphere$
\begin{align*}
c_1 & = \frac{\intv{\;(v \cdot \Omega) \exp \left ( - \frac{|v-\Omega|^2}{2\sigma} - \frac{V(|v|)}{\sigma} \right )}}{\intv{\exp \left ( - \frac{|v-\Omega|^2}{2\sigma} - \frac{V(|v|)}{\sigma} \right )}} = \frac{\int_{\R_+}r^d\intth{\cos \theta \;e(\cos \theta, r)\sin ^{d-2} \theta}\d r}{\int_{\R_+}r^{d-1}\intth{e(\cos \theta, r)\sin ^{d-2} \theta}\d r} \, .
\end{align*}
In order to determine all the collision invariants, we focus on the spectral decomposition of the pressure tensor $\calmo$ for any $\Omega \in \sphere$. In particular, the next lemma will imply the symmetry of the pressure tensor.
\begin{lemma} (Spectral decomposition of $\calmo$)
\label{Spectral}
For any $\Omega \in \sphere$ we have $\calmo = \sigma c_1 (I_d - \Omega \otimes \Omega)$. In particular we have $\ker (\calmo - \sigma c_1 I_d) = (\R \Omega )^\perp$ and thus $\mathrm{dim} \left ( \ker (\calL _f ^\star ) /\ker W \right ) = \mathrm{dim} \ker ( \calmo - \sigma c_1 I_d) = d-1$, cf. Lemma \ref{AdjointRes}.
\end{lemma}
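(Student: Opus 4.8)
The plan is to compute $\calmo$ entry by entry in an orthonormal basis $(e_1,\dots,e_{d-1},e_d=\Omega)$ of $\R^d$ adapted to $\Omega$, using the symmetries of $\mo$ inherited from the radial symmetry of $V$, and then to evaluate the single scalar that survives by an integration by parts in the angular variable $c=v\cdot\Omega/|v|$. First, since $P_f=I_d-\Omega\otimes\Omega$ is the orthogonal projection onto $\mathrm{span}(e_1,\dots,e_{d-1})$ and $P_f\Omega=0$, we have $P_f(v-\Omega)=P_fv$ with $(P_fv)_d=0$ and $(P_fv)_j=v_j$ for $j\le d-1$; hence the $d$-th column of $\calmo$ vanishes, and $(\calmo)_{ij}=\intv{(v-\Omega)_i\,v_j\,\mo(v)}$ for $j\le d-1$. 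Next, for fixed $j\le d-1$ take the reflection $\O_{e_j}=I_d-2e_j\otimes e_j$ already used in the proof of Lemma \ref{Equilibrium}: because $V$ is radial and $\O_{e_j}\Omega=\Omega$, one checks $\Phio(\O_{e_j}v)=\Phio(v)$, so $\mo\circ\O_{e_j}=\mo$ with unit Jacobian, while $v_j\mapsto-v_j$ and both $(v-\Omega)_d$ and $(v-\Omega)_i$ for $i\le d-1$, $i\ne j$, are left invariant. The change of variables $v\mapsto\O_{e_j}v$ therefore forces $(\calmo)_{ij}=0$ whenever $i\ne j$. Thus $\calmo=\sum_{j=1}^{d-1}(\calmo)_{jj}\,e_j\otimes e_j$ with $(\calmo)_{jj}=\intv{v_j^2\,\mo(v)}$.

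Invariance of $\mo$ under any rotation fixing $\Omega$ (again by radiality of $V$) shows that all the diagonal entries $(\calmo)_{jj}$, $j\le d-1$, coincide; calling their common value $\kappa$, the trace identity gives
\begin{equation*}
(d-1)\kappa=\mathrm{tr}\,\calmo=\intv{|P_f(v-\Omega)|^2\mo(v)}=\intv{|P_fv|^2\mo(v)},
\end{equation*}
which, in the notation of Remark \ref{c1}, is the average $\langle r^2(1-c^2)\rangle$ against $\mo$ (since $|P_fv|^2=|v|^2-(v\cdot\Omega)^2$). By formula \eqref{eq:34} with $\chi(c,r)=r^2(1-c^2)$,
\begin{equation*}
\langle r^2(1-c^2)\rangle=\frac{\int_{\R_+}r^{d+1}\int_{-1}^{+1}(1-c^2)^{\frac{d-1}{2}}e(c,r)\,\mathrm{d}c\,\mathrm{d}r}{\int_{\R_+}r^{d-1}\int_{-1}^{+1}(1-c^2)^{\frac{d-3}{2}}e(c,r)\,\mathrm{d}c\,\mathrm{d}r}\,.
\end{equation*}

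The key observation is that $e(c,r)=\exp(rc/\sigma)\exp(-(r^2+1)/(2\sigma)-V(r)/\sigma)$ satisfies the elementary identity $\sigma\,\partial_c e(c,r)=r\,e(c,r)$. Integrating by parts in $c$ in the numerator, using $\partial_c[(1-c^2)^{\frac{d-1}{2}}]=-(d-1)\,c\,(1-c^2)^{\frac{d-3}{2}}$ and the vanishing of the boundary terms $[(1-c^2)^{\frac{d-1}{2}}e]_{c=-1}^{c=1}$ for $d\ge2$, we obtain
\begin{equation*}
\frac{r}{\sigma}\int_{-1}^{+1}(1-c^2)^{\frac{d-1}{2}}e(c,r)\,\mathrm{d}c=(d-1)\int_{-1}^{+1}c\,(1-c^2)^{\frac{d-3}{2}}e(c,r)\,\mathrm{d}c\,,
\end{equation*}
so the numerator equals $(d-1)\sigma\int_{\R_+}r^{d}\int_{-1}^{+1}c\,(1-c^2)^{\frac{d-3}{2}}e(c,r)\,\mathrm{d}c\,\mathrm{d}r$. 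Comparing with the explicit expression for $c_1=\langle v\cdot\Omega\rangle$ displayed just after Remark \ref{c1}, this is exactly $(d-1)\sigma c_1$ times the denominator; hence $\kappa=\sigma c_1$ and $\calmo=\sigma c_1\sum_{j=1}^{d-1}e_j\otimes e_j=\sigma c_1(I_d-\Omega\otimes\Omega)$. In particular $\calmo$ is symmetric (and positive definite on $(\R\Omega)^\perp$).

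Finally, $\calmo-\sigma c_1 I_d=-\sigma c_1\,\Omega\otimes\Omega$, and since $c_1>0$ its kernel is precisely $\{w\in\R^d:\ w\cdot\Omega=0\}=(\R\Omega)^\perp$, of dimension $d-1$; Lemma \ref{AdjointRes} then gives $\mathrm{dim}\big(\ker(\calL_f^\star)/\ker W\big)=\mathrm{dim}\,\ker(\calmo-\sigma c_1 I_d)=d-1$. The symmetry bookkeeping (reflections, rotations, the trace step) is routine; the one genuinely delicate point is the integration by parts in $c$, which works exactly because of the exponential structure $\sigma\,\partial_c e=r\,e$ of the weight, and where one must pay attention to the integrability of $(1-c^2)^{\frac{d-3}{2}}$ near $c=\pm1$ and to the vanishing of the boundary term in the borderline case $d=2$.
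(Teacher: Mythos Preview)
Your proof is correct, and the symmetry reductions (reflections to kill off-diagonal entries, rotations to equalize the diagonal ones, trace to isolate the single scalar $\kappa$) match the paper's argument essentially line for line. The one place you genuinely diverge is in evaluating the scalar $\kappa=\frac{1}{d-1}\intv{(|v|^2-(v\cdot\Omega)^2)\mo(v)}$: you pass to the $(c,r)$ coordinates of Remark~\ref{c1} and integrate by parts in $c$ using the structural identity $\sigma\,\partial_c e=r\,e$, whereas the paper stays in $\R^d$ and observes directly that
\[
\big((|v|^2 I_d - v\otimes v)\Omega\big)\cdot\nabla_v \mo = \frac{|v|^2-(v\cdot\Omega)^2}{\sigma}\,\mo(v),
\]
then integrates by parts against this vector field to obtain $(d-1)c_1$ without ever writing down $(c,r)$ integrals. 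Both arguments exploit the same exponential structure of $\mo$; the paper's version is coordinate-free and sidesteps the endpoint and integrability checks at $c=\pm 1$ that you (rightly) flag as the delicate point of your computation, while your version has the virtue of being fully explicit and of reusing the $(c,r)$ framework that the paper will need anyway for the collision invariants in Section~\ref{IdentCollInv}.
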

\begin{proof}
Let us consider $\{E_1, \ldots, E_{d-1}\}$ an orthonormal basis of $(\R\Omega) ^\perp$. By using the decomposition 
\[
v - \Omega = ( \Omega \otimes \Omega ) (v-\Omega) + \sumi ( E_i \otimes E_i ) (v-\Omega) = ( \Omega \otimes \Omega ) (v-\Omega) + \sumi ( E_i \otimes E_i ) v \, ,
\]
one gets
\begin{equation}
\label{eq:26}
\calmo = \intv{\left [ ( \Omega \otimes \Omega ) (v-\Omega) + \sumi ( E_i \otimes E_i ) v
\right ]\otimes \left [ \sumj ( E_j \otimes E_j ) v \right ] \mo (v)} \, .
\end{equation}
We claim that the following equalities hold true
\begin{equation}
\label{eq:24}
\intv{[\Omega \cdot (v - \Omega)] (E_j \cdot v) \mo (v) } = 0 \, ,\;\;1\leq j \leq d-1 \, ,
\end{equation}
\begin{equation}
\label{eq:25}
\intv{(E_i \cdot v) (E_j \cdot v) \mo (v) } = \delta _{ij} \intv{\frac{|v|^2 - (v \cdot \Omega)^2}{d-1} \mo (v)},\;\;1\leq i, j \leq d-1 \, .
\end{equation}
Formula \eqref{eq:24} is obtained by using the change of variable $v = (I_d - 2 E_j \otimes E_j) \vp$. It is easily seen that 
\[
\Omega \cdot ( v - \Omega) = \Omega \cdot ( \vp - \Omega),\;\;E_j \cdot v = - E_j \cdot \vp,\;\;\mo (v) = \mo (\vp),\;\;1\leq j \leq d-1 \, ,
\]
and therefore we have
\[
\intv{[\Omega \cdot (v - \Omega)] (E_j \cdot v) \mo (v) }= - \intvp{[\Omega \cdot (\vp - \Omega)] (E_j \cdot \vp) \mo (\vp) }
\]
which implies \eqref{eq:24}. For the formulae \eqref{eq:25} with $i \neq j$, we appeal to the orthogonal transformation
\[
v = \O _{ij} \vp,\;\;\O _{ij} = \Omega \otimes \Omega + \sum _{k \notin \{i,j\}} E_k \otimes E_k + E_i \otimes E_j - E_j \otimes E_i \, .
\]
Notice that $\O _{ij} \xi = \xi$, for all $\xi \in ( \mathrm{span}\{E_i, E_j\} ) ^\perp$, $\O _{ij} E_i = - E_j$, $\O _{ij} E_j = E_i$ and therefore 
\[
(E_i \cdot v) (E_j \cdot v) = - (E_j \cdot \vp) (E_i \cdot \vp) \, .
\]
After this change of variable we deduce that
\[
\intv{(E_i \cdot v) (E_j \cdot v ) \mo (v) } = 0,\;\;1\leq i, j \leq d-1,\;\;i \neq j \, ,
\]
and also 
\[
\intv{(E_i \cdot v)^2 \mo (v) } = \intv{(E_j \cdot v)^2 \mo (v) },\;\;1\leq i, j \leq d-1 \, .
\]
Thanks to the equality $\sumi (E_i \cdot v)^2 = |v|^2 - (v \cdot \Omega )^2$, one gets
\[
\intv{(E_i \cdot v) (E_j \cdot v) \mo (v) } = \delta _{ij} \intv{\frac{|v|^2 - (v \cdot \Omega)^2}{d-1} \mo (v)},\;\;1\leq i, j \leq d-1 \, .
\]
Coming back to \eqref{eq:26} we obtain
\begin{align*}
\calmo = \sumi \left ( \intv{(E_i \cdot v)^2 \mo (v)} \right ) E_i \otimes E_i = \intv{\frac{|v|^2 - (v \cdot \Omega) ^2}{d-1} \mo (v)} (I_d - \Omega \otimes \Omega) \, .
\end{align*}
We are done if we prove that
\begin{equation*}
\label{eq:27}
\intv{\frac{|v|^2 - (v \cdot \Omega) ^2}{d-1} \mo (v)} = \sigma c_1 \, .
\end{equation*}
Notice that, using \eqref{eq:grad_m}: 
\[
\big( ( |v|^2 I_d - v \otimes v ) \Omega \big) \cdot \nabla _v \mo = \frac{|v|^2 - (v \cdot \Omega)^2}{\sigma} \mo (v) \, ,
\]
and therefore
\begin{align*}
\intv{\frac{|v|^2 - (v \cdot \Omega)^2}{\sigma} \mo (v)} & = - \intv{\Divv [ (|v|^2 I_d - v \otimes v) \Omega ]\mo (v) } \\
& = - \intv{[2(v \cdot \Omega) - d(v \cdot \Omega) - (v \cdot \Omega)] \mo (v) } \\
& = (d-1) c_1 \, .
\end{align*}
\end{proof}
By Lemmas \ref{AdjointRes} and \ref{Spectral} the computation of the collision invariants is reduced to the 
resolution of \eqref{eq:22} for any $W \in (\R \Omega) ^\perp$. Hence, if we denote by $E_1, E_2, \ldots , E_{d-1}$
any orthonormal basis of $(\R \Omega )^{\perp}$, we obtain a set of $d-1$ collision invariants $\psi_{E_1}, \psi_{E_2}, \ldots , \psi_{E_{d-1}}$ for $Q$ associated to the equilibrium distribution $f$ such that $E_i=W[\psi_{E_i}]$, $i=1,\dots,d-1$. 
This set of collision invariants forms a basis for the $\ker(\calL _f ^\star)$.
In the next section we will characterize this set of collision invariants and provide and easy manner to compute them (see Lemma \ref{InvField}).

We conclude this section by showing that in our case the set of all GCIs of the operator $Q$ 
coincide with the kernel of the 
operator $\calL _f ^\star$.
\begin{theorem}
\label{EquiCollInv}
Let $\mo$ be an equilibrium of $Q$ with nonvanishing orientation $\Omega \in \sphere$. The set of collision invariants of $Q$ 
associated to $\mo$ coincides with the set of the generalized collision invariants of $Q$ associated to $\Omega$. 
\end{theorem}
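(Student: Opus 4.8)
The plan is to prove the two inclusions separately, in each case reducing the statement to the explicit formula \eqref{eq:adjoint} for the adjoint $\calL_f^\star=\calL_\mo^\star$ (recall $\calL_{\rho\mo}=\calL_\mo$ by Proposition \ref{Linear}) together with the identity it contains, namely $\sigma\,\Divv(\mo\nabla_v\psi)/\mo=\calL_\mo^\star\psi-\pf\,v\cdot W[\psi]$, where $W[\psi]=\intv{\mo(v)\nabla_v\psi}/c_1$ and $\pf=I_d-\Omega\otimes\Omega$.

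For the easy inclusion, that every collision invariant is a GCI, I would take $\psi$ with $\calL_\mo^\star\psi=0$ and an admissible density $f$ in the sense of Definition \ref{GeneralizedCollisionInvariant}; since then $\Omega[f]=\Omega$, the operator $Q$ acts linearly, $Q(f)=\Divv\{\sigma\nabla_v f+f\nabla_v\Phio\}=\sigma\Divv(\mo\nabla_v(f/\mo))$, and two integrations by parts combined with the identity above and $\calL_\mo^\star\psi=0$ give
\[
\intv{Q(f)(v)\psi(v)}=\intv{\frac{f(v)}{\mo(v)}\,\sigma\Divv(\mo\nabla_v\psi)}=-\,W[\psi]\cdot\pf\!\intv{f(v)v}=0,
\]
the last equality being exactly the admissibility constraint $(I_d-\Omega\otimes\Omega)\intv{f(v)v}=0$. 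Hence $\psi$ is a GCI associated to $\Omega$; this direction is essentially a one-line computation once \eqref{eq:adjoint} is available.

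For the converse, that every GCI is a collision invariant, I would test the GCI property along perturbations of the equilibrium. Given a smooth, compactly supported $h$ with $(I_d-\Omega\otimes\Omega)\intv{h(v)v}=0$, the densities $f_\eps:=\mo+\eps h$ are admissible, and nonnegative for $|\eps|$ small, so $\intv{Q(f_\eps)\psi}=0$; differentiating at $\eps=0$ and using $\d Q_\mo=\calL_\mo$ (Proposition \ref{Linear}), this yields $\intv{h(v)\,(\calL_\mo^\star\psi)(v)}=0$ for every such $h$, i.e. for every $h$ that integrates to zero against $v\mapsto E_i\cdot v$, $i=1,\dots,d-1$, where $\{E_i\}$ is an orthonormal basis of $(\R\Omega)^\perp$. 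This should force $\calL_\mo^\star\psi$ to be a linear function of $v$ of the form $\pf\,v\cdot\mu$ with $\mu\in(\R\Omega)^\perp$ constant. To conclude $\mu=0$ I would then argue as in Lemma \ref{AdjointRes}: pairing the identity $\calL_\mo^\star\psi=\pf\,v\cdot\mu$ with the vector field $v\mapsto\mo(v)(v-\Omega)$ produces $\calmo\mu=\sigma c_1\mu$ on one side (Lemma \ref{Spectral}), while the defining property of the formal adjoint together with Proposition \ref{Linear}(3) and Lemma \ref{Spectral} gives
\[
\intv{(\calL_\mo^\star\psi)(v)\,\mo(v)(v-\Omega)}=\intv{\psi\,\calL_\mo(\mo(v-\Omega))}=-\,\sigma c_1(I_d-\pf)W[\psi]=-\,\sigma c_1\big(\Omega\cdot W[\psi]\big)\Omega
\]
on the other. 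Since $\sigma c_1\mu\in(\R\Omega)^\perp$ while $-\sigma c_1(\Omega\cdot W[\psi])\Omega\in\R\Omega$, both sides must vanish; in particular $\mu=0$, so $\calL_\mo^\star\psi=0$ and $\psi$ is a collision invariant associated to $\mo$.

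The step I expect to be the main obstacle is the implication ``$\intv{h\,(\calL_\mo^\star\psi)}=0$ for all admissible $h$'' $\Rightarrow$ ``$\calL_\mo^\star\psi=\pf\,v\cdot\mu$'' in the converse direction. To make it rigorous one has to work in the weighted space $\ltmo$ (so that the functions $v\mapsto E_i\cdot v$ lie in it and span a closed subspace), verify that $\calL_\mo^\star\psi$ is regular enough — e.g. that it belongs to $\ltmo$ — for the orthogonality characterisation of that subspace to apply, and keep track of exactly which densities $f$ may be used when differentiating under the integral sign. The remaining steps are bookkeeping with identities already established in Sections \ref{sec:preliminaries} and \ref{CharCollInv}.
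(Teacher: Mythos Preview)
Your argument is correct, and the direction CI $\Rightarrow$ GCI is the paper's computation in different dress. For the converse the paper takes a shorter path: since for an admissible $f$ the operator $Q$ acts with the fixed orientation $\Omega$, the map $f\mapsto\intv{Q(f)\psi}=\intv{f\,[\sigma\Delta_v\psi-\nabla_v\Phi_\Omega\cdot\nabla_v\psi]}$ is already linear, so the span argument yields directly
\[
\sigma\Delta_v\psi-\nabla_v\Phi_\Omega\cdot\nabla_v\psi+\pf v\cdot W=0,\qquad W\in(\R\Omega)^\perp,
\]
which is precisely \eqref{eq:22}; Lemma~\ref{AdjointRes} (together with Lemma~\ref{Spectral}) then gives $\calL_\mo^\star\psi=0$ at once. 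Your route --- differentiate to reach $\calL_\mo^\star\psi=\pf v\cdot\mu$, then pair with $\mo(v-\Omega)$ to force $\mu=0$ --- works, but the final pairing is essentially a re-derivation of the implication $2\Rightarrow1$ in Lemma~\ref{AdjointRes}: observing that $\pf v\cdot W[\psi]=\pf v\cdot\pf W[\psi]$, the identity $\calL_\mo^\star\psi=\pf v\cdot\mu$ already rewrites as \eqref{eq:22} with $W=\pf W[\psi]-\mu\in(\R\Omega)^\perp$, so Lemma~\ref{AdjointRes} applies without further work. What your approach does buy is robustness: by perturbing around $\mo$ you guarantee $\Omega[f_\eps]=\Omega$ and $f_\eps\geq0$, so the computation is insensitive to whether the GCI condition is read with $\Omega[f]$ or with the frozen $\Omega$ --- a point both you and the paper otherwise gloss over with the claim ``$\Omega[f]=\Omega$''.
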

\begin{proof}\\
Let $\psi = \psi (v)$ be a generalized collision invariant of $Q$ associated to $\Omega$. We denote by 
$\{e_1, \ldots, e_d\}$ the canonical basis of $\R^d$. For any $f = f(v)$ satisfying 
$(I_d - \Omega \otimes \Omega) e_i \cdot \intv{f(v) v } = 0, 1\leq i \leq d$, we have
\[
\intv{f(\sigma \Delta _v \psi - \nabla _v \Phio \cdot \nabla _v \psi )} = \intv{Q(f)(v) \psi (v)} = 0 \, .
\]
Therefore the linear form $f \to \intv{f(\sigma \Delta _v \psi - \nabla _v \Phio \cdot \nabla _v \psi)}$ is a 
linear combination of the linear forms $f \to (I_d - \Omega \otimes \Omega) e_i \cdot \intv{f(v) v }$. We 
deduce that there is a vector $\tilde{W} = (\tilde{W}_1,\ldots, \tilde{W}_d) \in \R^d$ such that 
\[
\intv{f(\sigma \Delta _v \psi - \nabla _v \Phio \cdot \nabla _v \psi )} + (I_d - \Omega \otimes \Omega) \tilde{W} \cdot \intv{f(v) v }= 0 \, ,
\]
for any $f$ and thus
\[
\sigma \Delta _v \psi - \nabla _v \Phio \cdot \nabla _v \psi  + (I_d - \Omega \otimes \Omega)v\cdot  \tilde{W}= 0 \, ,
\]
implying that $\psi$ satisfies \eqref{eq:22} with the vector $W =(I_d - \Omega \otimes \Omega)\tilde{W} \in (\R \Omega) ^\perp$, 
that is, $\psi$ is a collision invariant of $Q$ associated to $\mo$. 

Conversely, let $\psi = \psi (v)$ be a collision invariant of $Q$ associated to $\mo$. By Lemmas 
\ref{AdjointRes}, \ref{Spectral} we know that there is $W \in (\R \Omega)^\perp$ such that 
\[
\sigma \Delta _v \psi - \nabla _v \Phio \cdot \nabla _v \psi + v \cdot W = 0 \, .
\]
Multiplying by any function $f$ such that $(I_d - \Omega \otimes \Omega) \intv{f(v)v} = 0$ one gets
\[
\intv{Q(f) (v) \psi (v)} = \intv{f(v) ( \sigma \Delta _v \psi - \nabla _v \Phio \cdot \nabla _v \psi)} =
- \intv{f(v)v} \cdot W = 0 \, ,
\]
implying that $\psi$ is a generalized collision invariant of $Q$ associated to $\Omega$. 
\end{proof}

\section{Identification of the collision invariants}\label{IdentCollInv}

In this section we investigate the structure of the collision invariants 
of $Q$ associated to an equilibrium distribution $f = \rho \mo$. By Lemmas \ref{AdjointRes}, 
\ref{Spectral}, we need to solve the elliptic problem
\begin{equation}
\label{eq:28}
- \sigma \Divv (\mo \nabla _v \psi) = (v \cdot W) \mo (v),\;\;v \in \R^d \, ,
\end{equation}
for any $W \in (\R\Omega )^\perp$. We appeal to a variational formulation by considering the continuous bilinear 
symmetric form $a_\Omega: \homo \times \homo \to \R$ defined as
\[
a_\Omega (\chi, \theta) = \sigma \intv{\nabla _v \chi \cdot \nabla _v \theta\; \mo (v)},\;\;\chi, \theta \in \homo \, ,
\]
and the linear form $l : \homo \to \R$, $l (\theta) = \intv{\theta(v) (v \cdot W) \mo (v)}, \theta \in \homo$. Notice that $l$ is well defined and bounded on $\homo$ provided that the additional hypothesis $|v| \in \ltmo$ holds true, that is
\begin{equation*}
\label{eq:K}
\intv{|v|^2 \exp \left ( - \frac{|v- \Omega|^2}{2\sigma} - \frac{V(|v|)}{\sigma} \right )} < \infty \, .
\end{equation*}
The above hypothesis is obviously satisfied by the potentials $V_{\alpha, \beta}$. We say that $\psi \in \homo$ is a variational solution of \eqref{eq:28} if and only if 
\begin{equation}
\label{eq:29}
a_\Omega (\psi, \theta) = l(\theta)\;\;\mbox{ for any } \theta \in \homo \, .
\end{equation}

\begin{proposition}
A necessary and sufficient condition for the existence and uniqueness of variational solution to \eqref{eq:28} is
\begin{equation}
\label{eq:30}
\intv{(v \cdot W) \mo (v)} = 0 \, .
\end{equation}
\end{proposition}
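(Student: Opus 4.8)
The plan is to read \eqref{eq:29} as a variational problem for the continuous symmetric bilinear form $a_\Omega$, which is \emph{degenerate} on $\homo$ since it annihilates the constants, and to apply the Lax--Milgram theorem after factoring them out. Necessity of \eqref{eq:30} is immediate: the constant function $1$ belongs to $\homo$ (it lies in $\ltmo$ with $\nabla_v 1 = 0$), so $a_\Omega(\psi,1) = \sigma\intv{\nabla_v\psi\cdot\nabla_v 1\,\mo(v)} = 0$, and testing \eqref{eq:29} with $\theta\equiv 1$ forces $l(1) = \intv{(v\cdot W)\mo(v)} = 0$.

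For sufficiency, assuming \eqref{eq:30}, I would solve \eqref{eq:29} first on the closed subspace $\thomo = \{\chi\in\homo : \intv{\chi(v)\mo(v)} = 0\}$, on which $a_\Omega$ is coercive. Indeed the Poincar\'e inequality \eqref{eq:Poincare} (available under the hypotheses of Lemma \ref{RSVol4}) gives, for $\chi\in\thomo$, $a_\Omega(\chi,\chi) = \sigma\intv{|\nabla_v\chi|^2\mo(v)} \geq \lambda_\Omega\intv{|\chi(v)|^2\mo(v)}$, and combining this with the definition of $a_\Omega$ yields $a_\Omega(\chi,\chi) \geq c\,\|\chi\|_\mo^2$ for some constant $c = c(\sigma,\lambda_\Omega) > 0$. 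Continuity of $a_\Omega$ on $\homo\times\homo$ is clear, and $l$ is continuous on $\homo$ by Cauchy--Schwarz thanks to the additional hypothesis $|v|\in\ltmo$. Lax--Milgram then furnishes a unique $\psi\in\thomo$ with $a_\Omega(\psi,\theta) = l(\theta)$ for all $\theta\in\thomo$.

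The last step is to upgrade this to arbitrary test functions. Writing $\theta\in\homo$ as $\theta = \tilde\theta + \langle\theta\rangle$ with $\langle\theta\rangle := \intv{\theta(v)\mo(v)}$ and $\tilde\theta := \theta - \langle\theta\rangle\in\thomo$, the constant part contributes nothing to $a_\Omega(\psi,\cdot)$ and --- precisely here \eqref{eq:30} is used --- nothing to $l$ either, since $l(1) = 0$. Hence $a_\Omega(\psi,\theta) = l(\theta)$ for every $\theta\in\homo$, so $\psi$ is a variational solution of \eqref{eq:28}. For uniqueness, if $\psi_1,\psi_2$ both satisfy \eqref{eq:29} then $a_\Omega(\psi_1-\psi_2,\psi_1-\psi_2) = 0$, whence $\nabla_v(\psi_1-\psi_2) = 0$ $\mo$-a.e.\ and $\psi_1-\psi_2$ is constant; thus the variational solution is unique up to an additive constant, equivalently unique within $\thomo$.

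The only genuine difficulty --- the main obstacle --- is exactly this degeneracy of $a_\Omega$ on $\homo$, which prevents a direct use of Lax--Milgram and forces the detour through the zero-mean subspace, with the Poincar\'e/spectral-gap estimate of Lemma \ref{RSVol4} supplying coercivity there and the compatibility condition \eqref{eq:30} allowing the constants to be reinserted. An equivalent route would be the Fredholm alternative for the self-adjoint operator $\psi\mapsto -\sigma\,\Divv(\mo\nabla_v\psi)/\mo$, which has discrete spectrum and compact resolvent by the equivalence with the Schr\"odinger operator of Lemma \ref{RSVol4} and whose kernel is the line of constants: \eqref{eq:28} is then solvable if and only if the right-hand side $(v\cdot W)$ is orthogonal in $\ltmo$ to the constants, i.e.\ $\intv{(v\cdot W)\mo(v)} = 0$, which is \eqref{eq:30}.
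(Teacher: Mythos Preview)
Your proof is correct and follows essentially the same route as the paper: necessity by testing \eqref{eq:29} with $\theta=1$, sufficiency via Lax--Milgram on the zero-mean subspace $\thomo$ (coercivity from the Poincar\'e inequality \eqref{eq:Poincare}), extension to all of $\homo$ using $l(1)=0$, and uniqueness up to constants. The additional remark on the Fredholm alternative is a nice complement but not needed.
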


\begin{proof}
The necessary condition for the solvability of \eqref{eq:28} is obtained by taking $\theta = 1$ (which belongs to $\homo$ thanks to \eqref{eq:Z}) in \eqref{eq:29} leading to \eqref{eq:30}. This condition is satisfied for any $W \in (\R\Omega)^\perp$ since we have
\[
\intv{(v \cdot W)\mo (v)} = \intv{(v \cdot \Omega)\mo (v)}\;(\Omega \cdot W) = 0 \, .
\]
The condition \eqref{eq:30} also guarantees the solvability of \eqref{eq:28}. Indeed, under the hypotheses \eqref{eq:Z}, \eqref{eq:Poincare}, the bilinear form $a_\Omega$ is coercive on the Hilbert space $\thomo : = \{\chi \in \homo\;:\; ((\chi,1))_{\mo} = 0\}$, i.e. we have:
\[
a_\Omega (\chi, \chi) \geq \frac{\min\{\sigma,\lambda _\Omega\}}{2} \|\chi \|^2 _{\mo},\;\;\chi \in \thomo \, .
\]
Applying the Lax-Milgram lemma to \eqref{eq:29} with $\psi$, $\theta \in \thomo$ yields a unique function $\psi \in \thomo$ such that 
\begin{equation}
\label{eq:31}
a_\Omega (\psi, \tilde{\theta}) = l(\tilde{\theta})\;\;\mbox{ for any } \tilde{\theta} \in \thomo \, .
\end{equation}
Actually, the compatibility condition $l(1) = 0$ allows us to extend \eqref{eq:31} to $\homo$. This follows by 
applying \eqref{eq:31} with $\tilde{\theta} = \theta - ((\theta, 1))_{\mo}$, for $\theta \in \homo$. Moreover, the uniqueness of the 
solution for the problem on $\thomo$ implies the uniqueness, up to a constant, of the solution for the problem on $\homo$.
\end{proof}

\

As observed in \eqref{eq:20}, the fluid equations for $\rho$ and $\Omega$ will follow by appealing to the collision invariants associated to 
the orientation $\Omega \in \sphere$, for any $W \in (\R \Omega)^\perp$. When $W = 0$, the solutions of \eqref{eq:28} are all the 
constants, and we obtain the particle number balance \eqref{eq:ContEqu}. Consider now $W \in (\R \Omega)^\perp \setminus \{0\}$ and 
$\psi _W$ a solution of \eqref{eq:28}. Obviously we have $\psi _W = \tilde{\psi}_W + \intv{\psi_W (v)\mo (v)}$, where $\tilde{\psi}_W$ 
is the unique solution of \eqref{eq:28} in $\thomo$. It is easily seen, thanks to \eqref{eq:ContEqu} and the linearity of \eqref{eq:balance}, 
that the balances corresponding to $\psi _W$ and $\tilde{\psi}_W$ are equivalent. Therefore for any $W \in (\R \Omega)^\perp$ it is enough to 
consider only the solution of \eqref{eq:28} in $\thomo$. From now on, for any $W \in (\R \Omega)^\perp$, we denote by $\psi _W$ the unique 
variational solution of \eqref{eq:28} verifying $\intv{\psi_W (v) \mo (v)} = 0$. The structure of the solutions 
$\psi _W, W \in (\R \Omega)^\perp\setminus \{0\}$ comes by the symmetry of the equilibrium $\mo$. Analyzing the rotations leaving 
invariant the vector $\Omega$, we prove as in \cite{BosCar15} the following result.
\begin{proposition}
\label{Struct1}
Consider $W \in (\R \Omega)^\perp\setminus \{0\}$. For any orthogonal transformation $\O$ of $\R^d$ leaving $\Omega$ invariant, that is $\O \Omega = \Omega$, we have
\[
\psi _W (\O v) = \psi _{^t \O W } (v),\;\;v \in \R^d \, ,
\]
where $^t \O$ denotes the transpose of the matrix $\O$.
\end{proposition}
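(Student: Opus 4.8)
\medskip
\noindent\textbf{Proof idea.} The plan is to run a uniqueness argument: I would show that the function $v \mapsto \psi_W(\O v)$ solves exactly the variational problem \eqref{eq:28}--\eqref{eq:29} associated with the vector ${}^t\O W$, with the same normalization $\intv{\,\cdot\,\mo}=0$, and then conclude by the uniqueness part of the preceding Proposition.

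First I would record the invariance of the equilibrium under the rotation $\O$. Since $\O$ is orthogonal with $\O\Omega=\Omega$, we have $|\O v| = |v|$ and $|\O v - \Omega|^2 = |\O v - \O\Omega|^2 = |v-\Omega|^2$, hence $\Phio(\O v) = \Phio(v)$, and therefore $\mo(\O v) = \mo(v)$ while $\zo$ is unchanged. I would also note that ${}^t\O = \O^{-1}$ is again orthogonal and fixes $\Omega$, and that ${}^t\O W \in (\R\Omega)^\perp\setminus\{0\}$: indeed ${}^t\O W \cdot \Omega = W\cdot\O\Omega = W\cdot\Omega = 0$ and $\O$ is invertible, so by the preceding Proposition $\psi_{{}^t\O W}$ is well defined and characterized by \eqref{eq:29} together with $\intv{\psi_{{}^t\O W}(v)\mo(v)} = 0$.

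Next, set $\tilde\psi(v) := \psi_W(\O v)$. Since $v\mapsto\O v$ is a linear isometry, $\tilde\psi \in \homo$ with $\nabla_v\tilde\psi(v) = {}^t\O\,(\nabla_v\psi_W)(\O v)$, and the change of variable $w=\O v$ together with $\mo({}^t\O w) = \mo(w)$ gives $\intv{\tilde\psi(v)\mo(v)} = \intv{\psi_W(v)\mo(v)} = 0$; thus $\tilde\psi$ obeys the normalization singling out $\psi_{{}^t\O W}$. It then remains to verify \eqref{eq:29} for $\tilde\psi$ with the linear form $l$ built from ${}^t\O W$. For $\theta\in\homo$, put $\theta^\O(v) := \theta({}^t\O v)$, so that $\theta^\O\in\homo$ and $\nabla_v\theta^\O(v) = \O(\nabla_v\theta)({}^t\O v)$. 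In $a_\Omega(\tilde\psi,\theta) = \sigma\intv{{}^t\O(\nabla_v\psi_W)(\O v)\cdot\nabla_v\theta(v)\,\mo(v)}$ I would substitute $w=\O v$; using ${}^t\O\O = I_d$, $\mo({}^t\O w) = \mo(w)$, and $({}^t\O a)\cdot b = a\cdot(\O b)$, all factors of $\O$ collapse and one gets $a_\Omega(\tilde\psi,\theta) = a_\Omega(\psi_W,\theta^\O)$. By \eqref{eq:29} for $\psi_W$ this equals $\intv{\theta({}^t\O v)(v\cdot W)\mo(v)}$; substituting $v = \O u$ and using $\mo(\O u)=\mo(u)$ together with $\O u\cdot W = u\cdot{}^t\O W$ rewrites it as $\intv{\theta(u)(u\cdot{}^t\O W)\mo(u)}$, which is precisely the linear form $l$ associated with ${}^t\O W$ evaluated at $\theta$. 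Hence $\tilde\psi$ is the unique variational solution of \eqref{eq:28} for the vector ${}^t\O W$ with vanishing $\mo$-mean, i.e. $\tilde\psi = \psi_{{}^t\O W}$, which is the asserted identity.

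I do not expect a genuine obstacle here; the argument is essentially the same symmetry mechanism as in \cite{BosCar15}. The only points that require care are checking that the weighted space $\homo$ and the constraint $\intv{\,\cdot\,\mo}=0$ are stable under composition with $\O$, and that the transpose lands on the correct side in the two changes of variables — one inside the bilinear form $a_\Omega$ and one inside the linear form $l$.
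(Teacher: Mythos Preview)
Your argument is correct and is essentially the same symmetry-plus-uniqueness mechanism as the paper's proof: the paper phrases it via the minimization characterization, showing $J_{{}^t\O W}(z\circ\O)=J_W(z)$ and concluding that $\psi_W\circ\O$ minimizes $J_{{}^t\O W}$, whereas you use the equivalent Euler--Lagrange (weak) formulation and check $a_\Omega(\psi_W\circ\O,\theta)=l_{{}^t\O W}(\theta)$ directly. Both routes rest on the invariance $\mo\circ\O=\mo$ and the uniqueness in $\thomo$, so the difference is only cosmetic.
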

\begin{proof}
First of all notice that $^t \O W \in (\R \Omega) ^\perp \setminus \{0\}$. We know that $\psi _W$ is the minimum point of the functional
\[
J_W (z) = \frac{\sigma}{2}\intv{|\nabla _v z|^2 \mo (v)} - \intv{(v \cdot W) z(v) \mo (v)},z\in \thomo \, .
\]
It is easily seen that, for any orthogonal transformation $\O$ of $\R^d$ leaving the orientation $\Omega$ invariant, and any function $z \in \thomo$, we have, by defining $z_{\O}: = z\circ \O \in \thomo$
\[
\mo \circ \O = \mo,\;\;\nabla  z_\O = \;^t \O (\nabla z)\circ \O \, .
\]
Moreover, we obtain with the change of variables $v' = \O v$ and using that $M_\Omega ( v) = M_\Omega ( v')$:
\begin{align*}
J_{^t\O W} (z_{\O}) & = \frac{\sigma}{2}\intv{|^t \O (\nabla  z) (\O v)|^2 \mo (v)} - \intv{(v \cdot \;^t \O W) z (\O v)\mo (v)} \\
& = \frac{\sigma}{2}\intvp{|(\nabla  z) (\vp)|^2 \mo (\vp)} - \intvp{(\vp \cdot  W) z (\vp)\mo (\vp)} \\
& = J_W(z).
\end{align*}
Finally, we deduce that
\[
\psi _W \circ \O\in \thomo,\;\;
J_{^t \O W} (\psi _W \circ \O) = J_W (\psi _W ) \leq J_W(z \circ \;^t\O) = J_{^t \O W} (z) \, ,
\]
for any $z \in \thomo$, implying that $\psi _W \circ \O = \psi _{^t \O W}$. 
\end{proof}
The computation of the collision invariants $\{\psi _W\;:\; W \in (\R \Omega) ^\perp \setminus \{0\}\}$ 
can be reduced to the computation of one scalar function. For any orthonormal basis 
$\{E_1,\ldots,E_{d-1}\}$ of $(\R \Omega) ^\perp$ we define the vector field 
$F = \sumi \psi _{E_i} E_i$. This vector field does not depend upon the basis 
$\{E_1,\ldots,E_{d-1}\}$ and has the following properties, see \cite{BosCar15}.
\begin{lemma}
\label{InvField}
The vector field $F$ does not depend on the orthonormal basis $\{ E_1, \ldots, E_{d-1}\}$ of $(\R\Omega)^\perp$ and for any orthogonal transformation $\O$ of $\R^d$, preserving $\Omega$, we have $F \circ \O = \O F$. There is a function $\chi$ such that 
\[
F(v) = \chi \left ( \frac{v \cdot \Omega}{|v|}, |v| \right ) \frac{(I_d - \Omega \otimes \Omega)(v)}{\sqrt{|v|^2 - (v \cdot \Omega)^2}},\;\;v\in \R ^d  \setminus (\R \Omega) \, ,
\]
and thus, for any $i \in \{1, ..., d-1\}$, we have
\begin{equation}\label{eq:vfield}
\psi _{E_i} (v) = F(v)\cdot E_i = \chi \left ( \frac{v \cdot \Omega}{|v|}, |v| \right )\frac{v\cdot E_i}{\sqrt{|v|^2 - (v \cdot \Omega)^2}},\;\;v\in \R ^d  \setminus (\R \Omega) \, .
\end{equation}
\end{lemma}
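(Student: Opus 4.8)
The plan is to establish the three claims of Lemma \ref{InvField} in order: first the basis-independence of $F$, then the equivariance $F\circ\O = \O F$ under rotations fixing $\Omega$, and finally the representation formula via a scalar function $\chi$. For the basis-independence, I would take two orthonormal bases $\{E_i\}$ and $\{E_i'\}$ of $(\R\Omega)^\perp$ related by $E_i' = \sum_j O_{ij} E_j$ for an orthogonal $(d-1)\times(d-1)$ matrix $O$. Since $\psi_W$ depends linearly on $W$ (the elliptic problem \eqref{eq:28} is linear in $W$, and the variational solution in $\thomo$ is unique), one has $\psi_{E_i'} = \sum_j O_{ij}\psi_{E_j}$, so $\sum_i \psi_{E_i'} E_i' = \sum_i \sum_{j,k} O_{ij} O_{ik} \psi_{E_j} E_k = \sum_{j,k}\delta_{jk}\psi_{E_j}E_k = F$, using orthogonality of $O$. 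This is routine.

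For the equivariance, I would fix an orthonormal basis $\{E_i\}$ of $(\R\Omega)^\perp$ and observe that for an orthogonal $\O$ with $\O\Omega = \Omega$, the family $\{{}^t\O E_i\}$ is again an orthonormal basis of $(\R\Omega)^\perp$. Then, using Proposition \ref{Struct1} (which gives $\psi_W(\O v) = \psi_{{}^t\O W}(v)$) with $W = E_i$, I compute
\begin{align*}
F(\O v) &= \sumi \psi_{E_i}(\O v)\, E_i = \sumi \psi_{{}^t\O E_i}(v)\, E_i \\
&= \sumi \psi_{{}^t\O E_i}(v)\, \O ({}^t\O E_i) = \O \left( \sumi \psi_{{}^t\O E_i}(v)\, ({}^t\O E_i) \right) = \O F(v),
\end{align*}
where the last equality uses the already-proven basis-independence of $F$ applied to the orthonormal basis $\{{}^t\O E_i\}$. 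This is the cleanest ordering: basis-independence must come first so that it can be invoked here.

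The representation formula is the substantive part. The vector field $F$ lies in $(\R\Omega)^\perp$ pointwise: indeed each $\psi_{E_i}$ is scalar-valued but $F = \sum_i \psi_{E_i} E_i$ is a combination of the $E_i\in(\R\Omega)^\perp$, so $(I_d - \Omega\otimes\Omega)F = F$, i.e. $F\cdot\Omega \equiv 0$. Fix $v\in\R^d\setminus(\R\Omega)$ and decompose $v = (v\cdot\Omega)\Omega + w$ with $w = (I_d-\Omega\otimes\Omega)v \neq 0$. The subgroup of rotations fixing $\Omega$ acts transitively on the sphere of radius $|w|$ in $(\R\Omega)^\perp$ while fixing $(v\cdot\Omega)$; by the equivariance $F(\O v) = \O F(v)$, the component of $F(v)$ along $w/|w|$ and its perpendicular components within $(\R\Omega)^\perp$ transform consistently, and a standard symmetry argument (choosing $\O$ to be a reflection within $(\R\Omega)^\perp$ fixing the line $\R w$) forces $F(v)$ to be parallel to $w$. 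Hence $F(v) = g(v)\, w/|w|$ for some scalar $g(v)$, and applying equivariance once more shows $g$ is invariant under all rotations fixing $\Omega$, so it depends only on the two invariants $|v|$ and $v\cdot\Omega/|v|$ (equivalently $|w| = \sqrt{|v|^2-(v\cdot\Omega)^2}$ and $v\cdot\Omega$). Writing $g(v) = \chi\big(v\cdot\Omega/|v|,\,|v|\big)$ and noting $w = (I_d-\Omega\otimes\Omega)v$, $|w| = \sqrt{|v|^2-(v\cdot\Omega)^2}$ gives exactly the claimed formula, and dotting with $E_i$ yields \eqref{eq:vfield}.

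The main obstacle is the symmetry argument in the last step: one must carefully justify that the pointwise equivariance under the full rotation stabilizer of $\Omega$ forces $F(v)$ to be proportional to the projection $(I_d-\Omega\otimes\Omega)v$ and that the proportionality factor is a function of the two natural invariants only. The key device is to use, for a fixed $v$ with $(I_d-\Omega\otimes\Omega)v\neq 0$, the reflection $\O$ in the hyperplane $\mathrm{span}\{\Omega, w\}$ (which fixes $\Omega$, fixes $v$, hence fixes $w$, but reverses any vector in $(\R\Omega)^\perp$ orthogonal to $w$): from $F(v) = F(\O v) = \O F(v)$ we conclude $F(v)$ is fixed by $\O$, hence lies in $\mathrm{span}\{\Omega,w\}$, and combined with $F(v)\perp\Omega$ this gives $F(v)\in\R w$. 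The invariance of the scalar factor then follows by applying $F(\O v)=\O F(v)$ for general $\O$ in the stabilizer, since both $w$ and $|w|$ are equivariant/invariant in the required way. Everything else is bookkeeping with the linearity of \eqref{eq:28} and Proposition \ref{Struct1}.
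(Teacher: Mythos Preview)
Your proof is correct and follows essentially the same approach as the paper: basis-independence via linearity of $W \mapsto \psi_W$, equivariance via Proposition~\ref{Struct1} combined with basis-independence, and the representation formula by using orthogonal maps fixing $\Omega$ and $v$ to force $F(v)\parallel (I_d-\Omega\otimes\Omega)v$, then invariance of the scalar factor under the stabilizer of $\Omega$. One minor terminological slip: for $d>3$ the subspace $\mathrm{span}\{\Omega,w\}$ is not a hyperplane, so the map you describe (identity on $\mathrm{span}\{\Omega,w\}$, $-\mathrm{Id}$ on its complement) is not a single reflection---the paper instead applies the genuine hyperplane reflection $\O = I_d - 2\,{}^\perp E \otimes {}^\perp E$ for each unit ${}^\perp E$ orthogonal to both $\Omega$ and $w$---but your map is still orthogonal and fixes $\Omega$ and $v$, so the argument goes through unchanged.
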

\begin{proof}
Let $\{F_1,\ldots, F_{d-1}\}$ be another orthonormal basis  of $(\R \Omega)^\perp$. The following  identities hold
\[
E_1 \otimes E_1 +\ldots +E_{d-1} \otimes E_{d-1} + \Omega \otimes \Omega = I_d,\;\;F_1 \otimes F_1 + \ldots+F_{d-1} \otimes F_{d-1} + \Omega \otimes \Omega = I_d \, ,
\]
and therefore 
\begin{align*}
\sumi \psi _{E_i} E_i & = \sumi \psi _{\sumj (E_i \cdot F_j)F_j} E_i \\
& = \sumi \sumj (E_i \cdot F_j) \psi _{F_j} E_i \\
& = \sumj \psi _{F_j} \sumi (E_i \cdot F_j) E_i  \\
& = \sumj \psi _{F_j} F_j \, .
\end{align*}
For any orthogonal transformation of $\R^d$ such that $\O \Omega = \Omega$ we obtain thanks to Proposition~\ref{Struct1}
\[
F\circ \O  = \sum _{i = 1} ^{d-1} ( \psi _{E_i} \circ \O ) \;E_i  = \sum _{i = 1} ^{d-1}\psi _{^t \O E_i} \;E_i = \O \sum _{i = 1} ^{d-1}\psi _{^t \O E_i } \;^t \O E_i = \O F \, ,
\]
where, the last equality holds true since $\{^t \O E_1,\ldots,\;^t\O E_{d-1}\}$ is an orthonormal basis of $(\R \Omega) ^\perp$. Let $v \in \R^d \setminus (\R \Omega)$ and consider
\[
E ( v) = \frac{(Id- \Omega \otimes \Omega)v}{\sqrt{|v|^2 - (\Omega \cdot v)^2}} \, .
\]
Notice that $E \cdot \Omega = 0, |E| = 1$. When $d = 2$, since the vector  $F(v)$ is orthogonal to $\Omega$, there exists a function $\Lambda = \Lambda (v)$ such that 
\[
F(v) = \Lambda (v) E = \Lambda (v) \frac{(I_2- \Omega \otimes \Omega)v}{\sqrt{|v|^2 - (\Omega \cdot v)^2}},\;\;v \in \R^2 \setminus ( \R \Omega) \, .
\]
If $d \geq 3$, let us denote by $^\bot E$, any unitary vector orthogonal to $E$ and $\Omega$. Introducing the orthogonal matrix $\O = I_d - 2 \;^\bot E \otimes \;^\bot E$ (which leaves $\Omega$ invariant), we obtain $F \circ \O = \O F$. Observe that
\[
0 = \;^\bot E \cdot E = \;^\bot E\cdot \frac{v - (v \cdot \Omega)\Omega}{\sqrt{|v|^2 - (v \cdot \Omega) ^2}} = \frac{\;^\bot E\cdot v}{\sqrt{|v|^2 - (v \cdot \Omega) ^2}},\;\;\O v = v \, ,
\]
and thus 
\[
F(v) = F (\O v ) = \O F(v) = (I_d - 2 \;^\bot E \otimes \;^\bot E) F(v) = F(v) - 2 ( \;^\bot E \cdot F(v)) \;^\bot E \, ,
\]
from which it follows that $^\bot E \cdot F(v) = 0$, for any vector $^ \bot E$ orthogonal to $E$ and $\Omega$. Hence, there exists a function $\Lambda (v)$ such that 
\[
F(v) = \Lambda (v) E (v) = \Lambda (v) \frac{(I_d - \Omega \otimes \Omega)v}{\sqrt{|v|^2 - (v \cdot \Omega)^2}},\;\;v \in \R ^d \setminus (\R \Omega) \, .
\]
We will show that $\Lambda (v)$ depends only on $v\cdot \Omega/|v|$ and $|v|$. Indeed, for any $d\geq 2$, and any orthogonal transformation $\O$ such that $\O \Omega = \Omega$ we have $F(\O v) = \O F(v)$, $E ( \O v) = \O E ( v)$ because
\[
& (I_d - \Omega \otimes \Omega) \O v = \O v - (\Omega \cdot \O v ) \Omega = \O (I_d - \Omega \otimes \Omega) v,
\]
\[
\sqrt{|\O v |^2 - (\O v \cdot \Omega) ^2 } = \sqrt{|v|^2 - (v \cdot \Omega) ^2} \, ,
\]
implying that $\Lambda (\O v) = \Lambda (v)$, for any $v \in \R^d \setminus (\R \Omega)$. 
We are done if we prove that $\Lambda (v) = \Lambda (\vp)$ for any $v,\vp \in \R^d \setminus (\R \Omega)$ such that $v\cdot \Omega /|v| = \vp \cdot \Omega /|\vp|,|v| = |\vp|, v \neq \vp$. It is enough to consider the rotation $\O$ such that 
\[
\O E = E{\;^\prime},\;\;(\O - I_d)_{\mathrm{span}\{E, E^{\;\prime}\}^\perp} = 0,\;\;E = \frac{(I_d - \Omega \otimes \Omega)v}{\sqrt{|v|^2 - (v \cdot \Omega)^2}},\;E^{\;\prime} = \frac{(I_d - \Omega \otimes \Omega)\vp}{\sqrt{|\vp|^2 - (\vp \cdot \Omega)^2}} \, .
\]
The equality $\O E = E ^{\;\prime}$ implies that $\O v = \vp$ and therefore $\Lambda (\vp) = \Lambda (\O v) = \Lambda (v)$, showing that there exists a function $\chi$ such that $\Lambda (v) = \chi (v \cdot \Omega/|v|, |v|), v \in \R^d \setminus \{0\}$. 
\end{proof}

\

In the last part of this section we concentrate on the elliptic problem satisfied by the function 
$(c,r) \to \chi (c,r)$ introduced in Lemma \ref{InvField}. Even if $\psi _{E_i}$ are eventually singular on $\R\Omega$, it will be no difficulty to define a Hilbert space on which solving for the profile $\chi$.
We again proceed using the minimization
of quadratic functionals.
\begin{proposition}
\label{EllipticChi}
The function $\chi$ constructed in Lemma \ref{InvField} solves the problem
\begin{align}
- \sigma \partial _c \{ r ^{d-3} (1 - c^2) ^{\frac{d-1}{2}} e(c,r) \partial _c \chi \} & - \sigma \partial _r \{ r ^{d-1} (1 - c^2 ) ^{\frac{d-3}{2}} e(c,r) \partial _r \chi \} + \sigma (d-2) r ^{d-3} (1 - c^2 ) ^{\frac{d-5}{2}} e \chi \nonumber\\
& = r^d (1- c^2 ) ^{\frac{d-2}{2}} e(c,r),\;\;(c,r) \in ]-1,+1[\;\times \;] 0, \infty[ \, . \label{eq:36}
\end{align}
\end{proposition}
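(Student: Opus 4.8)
The plan is to obtain \eqref{eq:36} as the weak Euler--Lagrange equation satisfied by $\chi$, by transcribing the variational identity \eqref{eq:29} for $\psi_{E_i}$ into polar-type coordinates adapted to the axis $\R\Omega$. Fix an orthonormal basis $\{E_1,\dots,E_{d-1}\}$ of $(\R\Omega)^\perp$. For $v\in\R^d\setminus(\R\Omega)$ I write $v = r\big(c\,\Omega + \sqrt{1-c^2}\,\omega\big)$ with $r=|v|>0$, $c = v\cdot\Omega/|v|\in\,]-1,1[$ and $\omega\in\mathbb{S}^{d-2}\subset(\R\Omega)^\perp$; then $\d v = r^{d-1}(1-c^2)^{\frac{d-3}{2}}\,\d r\,\d c\,\d\sigma_{\mathbb{S}^{d-2}}(\omega)$, which is the computation behind Remark~\ref{c1}, and $\mo(v) = \zo^{-1}e(c,r)$ with $e$ as in the statement. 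By Lemma~\ref{InvField}, $\psi_{E_i}(v) = \chi(c,r)\,(\omega\cdot E_i)$, and it is enough to test \eqref{eq:29} against the functions $\theta(v) = \eta(c,r)\,(\omega\cdot E_i)$ with $\eta\in C_c^\infty(\,]-1,1[\,\times\,]0,\infty[\,)$; such $\theta$ are smooth and compactly supported away from the axis $\R\Omega$, hence lie in $\homo$.

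The first step is the gradient in these coordinates. With the moving orthonormal frame $e_r = v/|v|$, $e_\theta := c\,\omega - \sqrt{1-c^2}\,\Omega$ and the directions tangent to $\mathbb{S}^{d-2}$, one has for $g=g(c,r,\omega)$
\begin{equation*}
\nabla_v g = \partial_r g\; e_r \;-\; \frac{\sqrt{1-c^2}}{r}\,\partial_c g\; e_\theta \;+\; \frac{1}{r\sqrt{1-c^2}}\,\nabla_{\mathbb{S}^{d-2}} g \,.
\end{equation*}
Since $\nabla_{\mathbb{S}^{d-2}}(\omega\cdot E_i) = E_i - (E_i\cdot\omega)\omega$ is orthogonal to both $e_r$ and $e_\theta$, with $|E_i-(E_i\cdot\omega)\omega|^2 = 1-(E_i\cdot\omega)^2$, applying this formula to $\psi_{E_i}$ and to $\theta$ and contracting yields
\begin{equation*}
\nabla_v\psi_{E_i}\cdot\nabla_v\theta = \Big(\partial_r\chi\,\partial_r\eta + \frac{1-c^2}{r^2}\,\partial_c\chi\,\partial_c\eta\Big)(\omega\cdot E_i)^2 + \frac{\chi\,\eta}{r^2(1-c^2)}\big(1-(\omega\cdot E_i)^2\big)\,.
\end{equation*}

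The second step is to integrate over $\omega$, using $\int_{\mathbb{S}^{d-2}}(\omega\cdot E_i)^2\,\d\sigma = |\mathbb{S}^{d-2}|/(d-1)$, hence $\int_{\mathbb{S}^{d-2}}\big(1-(\omega\cdot E_i)^2\big)\,\d\sigma = (d-2)|\mathbb{S}^{d-2}|/(d-1)$; on the right-hand side of \eqref{eq:29} one has $(v\cdot E_i)\theta = r\sqrt{1-c^2}\,(\omega\cdot E_i)^2\eta$, producing the same angular constant and an extra factor $r\sqrt{1-c^2}$. Dividing out $|\mathbb{S}^{d-2}|/((d-1)\zo)$ and inserting the volume weight $r^{d-1}(1-c^2)^{\frac{d-3}{2}}$, the identity \eqref{eq:29} becomes, for every $\eta$,
\begin{equation*}
\sigma\int_{\R_+}\!\!\int_{-1}^{+1}\!\Big\{ r^{d-1}(1-c^2)^{\frac{d-3}{2}}\partial_r\chi\,\partial_r\eta + r^{d-3}(1-c^2)^{\frac{d-1}{2}}\partial_c\chi\,\partial_c\eta + (d-2)\,r^{d-3}(1-c^2)^{\frac{d-5}{2}}\chi\,\eta \Big\}e(c,r)\,\d c\,\d r = \int_{\R_+}\!\!\int_{-1}^{+1} r^{d}(1-c^2)^{\frac{d-2}{2}}e(c,r)\,\eta\,\d c\,\d r \,.
\end{equation*}
Integrating by parts in $r$ and in $c$ — with no boundary terms, since $\eta$ is compactly supported in $\,]-1,1[\,\times\,]0,\infty[\,$ — moves the derivatives onto the coefficients; as $\eta$ is arbitrary, this gives \eqref{eq:36} in the distributional sense (and classically wherever the coefficients are smooth, by elliptic regularity).

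The main obstacle is the bookkeeping of the first step: identifying the right moving frame and checking the mutual orthogonality of its radial, polar and spherical blocks, which is exactly what produces the three weights $r^{d-1}(1-c^2)^{\frac{d-3}{2}}$, $r^{d-3}(1-c^2)^{\frac{d-1}{2}}$ and the delicate zeroth-order coefficient $(d-2)\,r^{d-3}(1-c^2)^{\frac{d-5}{2}}$. The last term comes solely from the tangential piece $\frac{1}{r\sqrt{1-c^2}}\nabla_{\mathbb{S}^{d-2}}$ of the gradient together with the angular average of $1-(\omega\cdot E_i)^2$; it is a useful sanity check that it degenerates precisely when $d=2$, where $\mathbb{S}^{d-2}$ is a pair of points and $\psi_{E_i}$ reduces to $\pm\chi(c,r)$. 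A minor point worth recording is that the singularity of $\psi_{E_i}$ on $\R\Omega$ never interferes, because the admissible test functions vanish near $c=\pm1$, so all integrations by parts are legitimate.
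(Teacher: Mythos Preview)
Your proof is correct and follows essentially the same approach as the paper: both reduce the $d$-dimensional variational problem for $\psi_{E_i}$ to a two-dimensional one in the coordinates $(c,r)$ by exploiting the ansatz of Lemma~\ref{InvField}, compute the gradient in these coordinates (you via a moving orthonormal frame, the paper via a direct chain-rule expansion of $\nabla_v\psi_{E_i,h}$), and read off the elliptic equation \eqref{eq:36}. The only cosmetic difference is that the paper phrases the reduction through the minimization $J_{E_i}(\psi_{E_i})\le J_{E_i}(\psi_{E_i,h})$ and then invokes Lax--Milgram on the reduced functional $J(h)$ over a Hilbert space $H_d$, whereas you test the weak identity \eqref{eq:29} directly against $\theta=\eta(c,r)(\omega\cdot E_i)$ and integrate by parts; the underlying computations and conclusion are the same.
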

\begin{proof}
For any $i \in \{1,\ldots,d-1\}$, let us consider $\psi _{E_i,h} (v) = h ( v \cdot \Omega /|v|, |v|) \frac{v \cdot E_i}{\sqrt{|v|^2 - ( v \cdot \Omega )^2}}$ where $v \to h ( v \cdot \Omega /|v|, |v|)$ is a function such that $\psi _{E_i,h} \in \homo$. Observe that if $h = \chi$, then $\psi _{E_i,h}$ coincides with $\psi _{E_i}$. Note that generally $\psi _{E_i,h}$ are not collision invariants, but perturbations of them, corresponding to profiles $h$. In this way, the minimization problem \eqref{eq:33} will lead to a minimization problem on $h$, whose solution will be $\chi$. Notice that once that $\psi _{E_i,h} \in \homo$, then $\intv{\psi _{E_i,h}\mo (v)} = 0$, saying that $\psi _{E_i,h} \in \thomo$. We know that $\psi _{E_i}$ is the minimum point of $J_{E_i}$ on $\thomo$ and therefore
\begin{equation}
\label{eq:33}
J_{E_i} (\psi _{E_i}) \leq J_{E_i}(\psi _{E_i,h}) \, .
\end{equation}
A straightforward computation shows that 
\begin{align*}
\nabla _v \psi _{E_i,h} & = \frac{v \cdot E_i}{\sqrt{|v|^2 - ( v \cdot \Omega )^2}}\left [\partial _c h \frac{I_d - \frac{v \otimes v}{|v|^2}}{|v|} \Omega + \partial _r h \frac{v}{|v|}    \right ]\\
& + h \left ( \frac{v \cdot \Omega}{|v|}, |v|\right ) \left [ I_d - \frac{(v - (v \cdot \Omega) \Omega ) \otimes v }{|v|^2 - (v \cdot \Omega)^2} \right ]\frac{E_i}{\sqrt{|v|^2 - ( v \cdot \Omega )^2}} \, ,
\end{align*}
and 
\begin{align*}
|\nabla _v \psi _{E_i,h}|^2 & = \frac{(v \cdot E_i )^2}{|v|^4} (\partial _c h )^2 + \frac{(v \cdot E_i )^2}{|v|^2 - (v \cdot \Omega) ^2} (\partial _r h )^2 
 + \frac{|v|^2 - (v \cdot \Omega) ^2 - (v \cdot E_i )^2 }{( |v|^2 - ( v \cdot \Omega )^2)^2} h ^ 2\left ( \frac{v \cdot \Omega}{|v|}, |v|\right ) \, .
\end{align*}
The condition $\psi _{E_i,h} \in \homo$ writes
\[
\intv{(\psi _{E_i,h})^2 \mo (v)} < \infty,\;\;\intv{|\nabla _v \psi _{E_i,h}|^2 \mo (v)} <  \infty \, ,
\]
which is equivalent, thanks to the Poincar\'e inequality \eqref{eq:Poincare} to
\[
\intv{|\nabla _v \psi _{E_i,h}|^2 \mo (v)} <  \infty.
\]
Based on formula \eqref{eq:34}, we have
\begin{align*}
\intv{|\nabla _v \psi _{E_i,h}|^2 \mo (v)} & = \intv{\left [\frac{|v|^2 - (v \cdot \Omega) ^2 }{(d-1) |v|^4} (\partial _c h )^2 + \frac{(\partial _r h )^2}{d-1} + \frac{(d-2)h^2}{(d-1)(|v|^2 - (v \cdot \Omega) ^2 )}   \right ] \mo}\\
& = \frac{ \int _{\R_+} r ^{d-1} \int _{-1} ^{+1}\left [\frac{1-c^2}{r^2} (\partial _c h)^2 + (\partial _r h )^2 + \frac{(d-2)h^2}{r^2 (1-c^2)}  \right ]e(c,r) (1- c^2) ^{\frac{d-3}{2}} \d c\d r}{(d-1) \int _{\R_+} r ^{d-1} \int _{-1} ^{+1} e(c,r) (1- c^2) ^{\frac{d-3}{2}} \d c\d r \}}
\end{align*}
and therefore we consider the Hilbert space $H_d = \{h :\; ]-1, +1[ \times ] 0, \infty[ \to \R, \|h \|_d ^2 < \infty \}$, 
endowed with the scalar product
\[
(g,h)_d = \int_{\R_+} r ^{d-1} \int _{-1} ^{+1}\left [\frac{1-c^2}{r^2} \partial _c g \partial _c h + \partial _r g \partial _r h + \frac{(d-2)gh}{r^2 (1- c^2)}   \right ]e(c,r) (1- c^2) ^{\frac{d-3}{2}} \d c\d r
\]
for $g$ and $h$ in $H_d$ and the norm given by
\[
\| h \|_{d} = \sqrt{ ( h, h)_d} \, .
\]
The expression $J_{E_i}(\psi _{E_i,h})$ writes as functional of $h$
\begin{align*}
J_{E_i}(\psi _{E_i,h}) & = \frac{\sigma}{2}\intv{|\nabla _v \psi _{E_i,h}|^2 \mo (v)} - \intv{\frac{(v \cdot E_i)^2 h\left ( \frac{v \cdot \Omega}{|v|}, |v|\right )}{\sqrt{|v|^2 - (v \cdot \Omega)^2}}\mo (v)}\\
& = \frac{\sigma}{2} \intv{|\nabla _v \psi _{E_i,h}|^2 \mo (v)} - \intv{\frac{h\left ( \frac{v \cdot \Omega}{|v|}, |v|\right )\sqrt{|v|^2 - (v \cdot \Omega)^2}}{d-1}\mo (v)}\\
& = \frac{J(h)}{(d-1) \int_{\R_+}r^{d-1} \int _{-1} ^{+1} e(c,r) ( 1 - c^2) ^{\frac{d-3}{2}}\d c \d r } \, ,
\end{align*}
where 
\begin{align*}
J(h) & = \frac{\sigma}{2} \int _{\R_+} r ^{d-1} \int _{-1} ^{+1} \left [\frac{1-c^2}{r^2} ( \partial _c h)^2 + (\partial _r h)^2 + \frac{(d-2)h^2}{r^2 (1- c^2)} \right ] e(c,r) (1-c^2) ^{\frac{d-3}{2}}\d c \d r \\
& - \int _{\R_+} r ^{d-1} \int _{-1} ^{+1} h(c,r) r \sqrt{1-c^2} e(c,r) (1-c^2) ^{\frac{d-3}{2}}\d c \d r \, .
\end{align*}
Coming back to \eqref{eq:33} and using \eqref{eq:vfield}, we deduce that
\[
\chi \in H_d\;\;\mbox{ and } \;\;J(\chi) \leq J(h)\;\;\mbox{ for any }\;\; h \in H_d \, .
\]
Therefore, by the Lax-Milgram lemma, we deduce that $\chi$ solves the problem \eqref{eq:36}. 
\end{proof}

\section{Hydrodynamic equations}
\label{Hydro}
After identifying the collision invariants, we determine the fluid equations satisfied by the macroscopic 
quantities entering the dominant 
particle density $f(t,x,v) = \rho (t,x) M_{\Omega (t,x)} (v)$. As seen before the balance for 
the particle density follows thanks to the collision invariant $\psi = 1$.
The other balances follow by appealing to the vector field $F$ (cf. Lemma \ref{InvField})
and the details are given in Sect. \ref{sec:hydro}. 
\subsection{Proof of Theorem \ref{MainResult1}}\label{sec:hydro}
Applying \eqref{eq:20} with $\psi = 1$ leads to \eqref{eq:ContEqu}. For any $(t,x) \in \R_+ \times \R^d$ we consider the vector field
\[
v \to F(t,x,v) = \chi \left (\frac{v \cdot \Omega (t,x)}{|v|}, |v|   \right ) \frac{(I_d - \Omega (t,x) \otimes \Omega (t,x))v}{\sqrt{|v|^2 - (v \cdot \Omega (t,x))^2}} \, .
\]
By the definition of $F(t,x,\cdot)$, we know that
\[
\calL _{f(t,x,\cdot)} ^\star F(t,x,\cdot) = 0,\;\;(t,x) \in \R_+ \times \R^d \, ,
\]
and therefore \eqref{eq:20} implies
\begin{equation}
\label{eq:40}
\intv{\partial _t \fz \;F(t,x,v)} + \intv{v \cdot \nabla _x \fz  \;F(t,x,v) } = 0,\;\;(t,x) \in \R_+ \times \R^d \, .
\end{equation}
It remains to compute $\intv{\partial _t \fz F(t,x,v)}$ and $\intv{v \cdot \nabla _x \fz  F(t,x,v) }$ 
in terms of $\rho (t,x)$ and $\Omega (t,x)$. By a direct computation we obtain
\[
\partial _t \fz = \partial _t \rho \mo + \rho \frac{\mo }{\sigma} (v - \Omega) \cdot \partial _t \Omega \, ,
\]
implying, thanks to the equalities $\partial _t \Omega \cdot \Omega = 0$ and $v \cdot \partial_t \Omega = P_f v \cdot \partial_t \Omega$,
\begin{align}
\label{eq:41}
& \intv{\partial _t \fz \;F}  = \intv{\left ( \partial _t \rho + \frac{\rho }{\sigma} (v - \Omega) \cdot \partial _t \Omega \right ) \chi \left ( \frac{v \cdot \Omega}{|v|}, |v|\right ) \mo (v) \frac{P_f v}{|P_f v |}}\\
& = \partial _t \rho \intv{\chi \left ( \frac{v \cdot \Omega}{|v|}, |v|\right )\mo (v) \frac{P_f v}{|P_f v |}} + \frac{\rho}{\sigma} \intv{\chi \left ( \frac{v \cdot \Omega}{|v|}, |v|\right )\mo (v) \frac{P_f v \otimes P_f v}{|P_f v|}}\;\partial _t \Omega \, . \nonumber 
\end{align}
It is an easy exercise to show that the integral $\intv{\chi \left ( \frac{v \cdot \Omega}{|v|}, |v|\right )\mo (v) \frac{P_f v}{|P_f v |}}$ 
vanishes and that the following relationship holds
\begin{align}
& \intv{\chi \left ( \frac{v \cdot \Omega}{|v|}, |v|\right )\mo (v) \frac{P_f v \otimes P_f v}{|P_f v|}} \nonumber \\
& \hspace{4cm} = \intv{\chi \left ( \frac{v \cdot \Omega}{|v|}, |v|\right )\frac{\sqrt{|v|^2 - (v \cdot \Omega)^2}}{d-1}\mo (v)}(I_d - \Omega \otimes \Omega) \, . \nonumber 
\end{align}
Therefore, by taking into account that $\partial _t \Omega \cdot \Omega = 0$, the equality \eqref{eq:41} becomes
\begin{equation}
\label{eq:42}
\intv{\partial _t \fz \;F}  = \tilde{c}_1 \frac{\rho}{\sigma} \partial _t \Omega,\;\;\tilde{c}_1 = \intv{\chi \left ( \frac{v \cdot \Omega}{|v|}, |v|\right )\frac{\sqrt{|v|^2 - (v \cdot \Omega)^2}}{d-1}\mo (v)} \, .
\end{equation}
Similarly we write (for any smooth vector field $\xi (x)$, the notation $\partial _x \xi$ stands for the Jacobian matrix of $\xi$, i.e. $( \partial_x \xi)_{ i, j} = \partial_{x_j} \xi_i$)
\[
v \cdot \nabla _x f = (v \cdot \nabla _x \rho ) \mo + \frac{\rho}{\sigma} v \cdot ( \;^t \partial _x \Omega (v-\Omega)) \mo \, , 
\]
implying 
\begin{align}
\label{eq:43}
\intv{(v \cdot \nabla _x f ) \;F} & = \intv{\chi \left ( \frac{v \cdot \Omega}{|v|}, |v|\right )\mo (v) \frac{P_f v \otimes P_f v}{|P_f v|} } \, \nabla _x \rho \\
& + \frac{\rho}{\sigma} \intv{(v \cdot \Omega) \chi \left ( \frac{v \cdot \Omega}{|v|}, |v|\right )\mo (v) \frac{P_f v \otimes P_f v}{|P_f v|} }\; ( \partial _x \Omega) \, \Omega \nonumber\\
& = \tilde{c}_1 (I_d - \Omega \otimes \Omega) \nabla _x \rho + \frac{\rho}{\sigma}\tilde{c}_2 \partial _x \Omega \;\Omega \, , \nonumber
\end{align}
where 
\[
\tilde{c}_2 = \intv{(v \cdot \Omega) \chi \left ( \frac{v \cdot \Omega}{|v|}, |v|\right )\frac{\sqrt{|v|^2 - (v \cdot \Omega)^2}}{d-1}\mo (v)} \, .
\]
Notice that in the above computations we have used $(^t \partial _x \Omega) \Omega = 0$ and
\[
\intv{(v \cdot E_i) (v \cdot E_j) (v \cdot E_k)  \chi \left ( \frac{v \cdot \Omega}{|v|}, |v|\right )\mo (v)} = 0 \, ,
\]
for any $i,j,k \in \{1,\ldots, d-1\}$. Combining \eqref{eq:40}, \eqref{eq:42} and \eqref{eq:43} yields
\[
\tilde{c}_1 \frac{\rho}{\sigma} \partial _t \Omega + \frac{\rho}{\sigma} \tilde{c}_2 \;\partial _x \Omega \;\Omega + \tilde{c_1} (I_d - \Omega \otimes \Omega) \nabla _x \rho = 0 \, ,
\]
or equivalently 
\begin{equation}
\label{eq:OmegaEvol}
\partial _t \Omega + c_2 \;\partial _x \Omega \;\Omega + \sigma (I_d - \Omega \otimes \Omega)\frac{\nabla _x \rho}{\rho} = 0 \, ,
\end{equation}
where 
\begin{align*}
c_2 & = \frac{\tilde{c}_2}{\tilde{c}_1} = \frac{\intv{\;(v \cdot \Omega) \, \chi \left ( \frac{v \cdot \Omega}{|v|}, |v|\right )\sqrt{|v|^2 - (v \cdot \Omega)^2}\;\mo (v)}}{\intv{ \chi \left ( \frac{v \cdot \Omega}{|v|}, |v|\right )\sqrt{|v|^2 - (v \cdot \Omega)^2}\;\mo (v)}}\\
& = \frac{\int_{\R_+}r^{d+1} \intth{\cos \theta \, \chi (\cos \theta, r) \, e(\cos \theta, r) \sin ^{d-1} \theta}\d r}{\int_{\R_+}r^{d} \intth{\chi (\cos \theta, r) \, e(\cos \theta, r) \sin ^{d-1} \theta}\d r} \, .
\end{align*}

\subsection{Properties of the hydrodynamic equations}
Let us start by noticing that the system \eqref{eq:bas_rho}-\eqref{eq:bas_omega} is hyperbolic as a consequence of 
Theorem 4.1 in \cite{F12}. On the other hand, the orientation balance equation \eqref{eq:OmegaEvol} propagates the 
constraint $|\Omega| = 1$. Indeed, let $\Omega = \Omega (t,x)$ be a smooth solution of \eqref{eq:OmegaEvol}, 
satisfying $|\Omega (0, \cdot)| = 1$. Multiplying by $\Omega (t,x)$ we obtain
\[
\frac{1}{2} \partial _t |\Omega |^2 + \frac{c_2}{2} ( \Omega (t,x) \cdot \nabla _x ) |\Omega |^2 = 0 \, ,
\]
implying that $|\Omega|$ is constant along the characteristics of the vector field $c_2 \Omega (t,x) \cdot \nabla _x$ 
and thus $|\Omega (t,\cdot)| = |\Omega (0,\cdot)| = 1$, for all $t \geq 0$.

The rescaled equation \eqref{eq:main} can be considered as an intermediate model between the equations introduced 
in \cite{BD14} and \cite{BosCar15} when there are no phase transitions. 
In \cite{BosCar15}, the authors considered a strong relaxation towards the 'terminal speed' (or cruise speed). Whereas in \cite{BD14} 
  the authors do not impose a penalization on the self-propelled/friction term. 
Our result could be applied to obtain the results in \cite{BosCar15} without resorting to measures supported on the sphere 
by doing a double passage to the limit. First, passing to the limit $\eps \rightarrow 0$ in \eqref{eq:main}, taking $V = V_{\alpha, \beta}$, we obtain
\eqref{eq:bas_rho}-\eqref{eq:bas_omega}. Afterwards, we rescale $V$ to $\lambda \widetilde{V}$ in the system \eqref{eq:bas_rho}-\eqref{eq:bas_omega}
and study the limit when $\lambda \rightarrow \infty$. This amounts to study the behavior of the coefficients 
\[
 c_{ 1, \lambda} =  \frac{\int_{\R_+}r^d \intth{\cos \theta \, e_\lambda (\cos \theta, r) \sin ^{d-2} \theta}\d r}{\int_{\R_+}r^{d-1} \intth{e_\lambda (\cos \theta, r) \sin ^{d-2} \theta}\d r} \, ,
\]
and
\[
c_{2, \lambda} = \frac{\int_{\R_+}r^{d+1} \intth{\cos \theta \, \chi_\lambda (\cos \theta, r) \, e_\lambda (\cos \theta, r) \sin ^{d-1} \theta}\d r}{\int_{\R_+}r^{d} \intth{\chi_\lambda (\cos \theta, r) \, e_\lambda (\cos \theta, r) \sin ^{d-1} \theta}\d r} \, ,
\]
when $\lambda \rightarrow \infty$, where the function $\chi_\lambda$ solves the elliptic problem
\begin{align*}
& - \sigma \partial _c \left [r^{d-3} (1 - c^2) ^{\frac{d-1}{2}} e_\lambda(c,r) \partial _c \chi_\lambda   \right ] \\
& \qquad - \sigma \partial_r \left [r^{d-1}(1 - c^2 ) ^{\frac{d-3}{2}} e_\lambda(c,r) \partial_r \chi_\lambda  \right ] + \sigma (d-2) r ^{d-3} (1- c^2) ^{\frac{d-5}{2}} e_\lambda \chi_\lambda \nonumber \\
& \qquad \qquad = r^d (1-c^2) ^{\frac{d-2}{2}}e_\lambda(c,r) \, ,
\end{align*}
and $e_\lambda (c,r) = \exp(rc/\sigma) \exp(- (r^2 + 1)/(2\sigma) - \lambda \widetilde{V} (r)/\sigma)$. 

In order to analyse the asymptotic behavior of $c_{1, \lambda}$ we introduce the following result.

\begin{lemma}\label{lem:asymptotic}
 Let $\varphi \in C^2 ( (0, \infty); \R)$ and $g \in C^0 ( (0, \infty) \times ( 0, \pi); \R )$.
 Let us assume that 
 \begin{itemize}
  \item[i)] $\int_{\R_+} \int_0^\pi \exp ( \lambda \varphi ( r)) | g ( r, \theta)| \d \theta \d r < \infty$ ,
  \item[ii)] The function $\varphi$ has a unique global maximum at an interior point $r_0$ ,
  \item[iii)] $\int_0^\pi g ( r_0, \theta) \d \theta \neq 0$ .
 \end{itemize}
 Then the function $G( \lambda)$ defined as 
 \[
  G( \lambda) = \int_{\R_+} \int_0^\pi \exp ( \lambda \varphi ( r)) g ( r, \theta) \d \theta \d r \, ,
 \]
 has the the following asymptotic behavior
 \[
  G( \lambda) \sim \sqrt{ \frac{ 2 \pi }{ | \varphi'' ( r_0)|}} \frac{\exp ( \lambda \varphi ( r_0))}{ \sqrt{ \lambda}} \int_0^\pi g( r_0, \theta) \d \theta \, , 
 \]
 as $\lambda \rightarrow \infty$.
\end{lemma}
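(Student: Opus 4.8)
The plan is to recognise $G(\lambda)$ as a one-dimensional Laplace-type integral in the radial variable $r$ and to run the classical Laplace method, the angular variable entering only through the constant $\int_0^\pi g(r_0,\theta)\,d\theta$. \textbf{Step 1 (reduction to one dimension).} First I would set $\tilde g(r):=\int_0^\pi g(r,\theta)\,d\theta$. By hypothesis (i) and Fubini's theorem $\tilde g(r)$ is finite for a.e.\ $r$, $\int_{\R_+}e^{\lambda\varphi(r)}|\tilde g(r)|\,dr<\infty$, and $G(\lambda)=\int_{\R_+}e^{\lambda\varphi(r)}\tilde g(r)\,dr$. Using the continuity of $g$ together with a local integrable majorant in $\theta$ near $r=r_0$ (available in the situations where this lemma is applied), $\tilde g$ is continuous at $r_0$ and bounded on a neighbourhood of $r_0$, and by (iii) $\tilde g(r_0)\neq 0$. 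Since $r_0$ is an interior maximum and $\varphi\in C^2$ we have $\varphi'(r_0)=0$; because the statement divides by $|\varphi''(r_0)|$ I work under $\varphi''(r_0)<0$ (which is what makes the stated rate correct and holds in the applications), so that after shrinking $\delta>0$ one may assume $[r_0-\delta,r_0+\delta]\subset(0,\infty)$ and $\varphi''(r)\le\tfrac12\varphi''(r_0)<0$ there.

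\textbf{Step 2 (the tail is negligible).} I would split $G(\lambda)=\int_{|r-r_0|<\delta}+\int_{|r-r_0|\ge\delta}$. Since $r_0$ is the \emph{unique} global maximum, $M_\delta:=\sup\{\varphi(r):|r-r_0|\ge\delta\}<\varphi(r_0)$; on the unbounded interval $(0,\infty)$ this also uses that $\varphi$ stays away from its maximal value near $0$ and near $\infty$, which is automatic in our setting. Fixing $\lambda_1$ with (i) and writing $e^{\lambda\varphi}=e^{(\lambda-\lambda_1)\varphi}e^{\lambda_1\varphi}$, for $\lambda>\lambda_1$ one obtains
\begin{align*}
\Big|\int_{|r-r_0|\ge\delta}e^{\lambda\varphi(r)}\tilde g(r)\,dr\Big|\le e^{(\lambda-\lambda_1)M_\delta}\int_{|r-r_0|\ge\delta}e^{\lambda_1\varphi(r)}|\tilde g(r)|\,dr=C_\delta\,e^{\lambda M_\delta},
\end{align*}
with $C_\delta<\infty$ by (i); since $M_\delta<\varphi(r_0)$ this is $o\big(\lambda^{-1/2}e^{\lambda\varphi(r_0)}\big)$.

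\textbf{Step 3 (the peak).} On $|r-r_0|<\delta$ Taylor's theorem gives $\varphi(r)=\varphi(r_0)+\tfrac12\varphi''(\xi_r)(r-r_0)^2$ with $\xi_r$ between $r_0$ and $r$. Rescaling $r=r_0+\lambda^{-1/2}s$,
\begin{align*}
\int_{|r-r_0|<\delta}e^{\lambda\varphi(r)}\tilde g(r)\,dr=\frac{e^{\lambda\varphi(r_0)}}{\sqrt\lambda}\int_{|s|<\delta\sqrt\lambda}\exp\big(\lambda[\varphi(r_0+\lambda^{-1/2}s)-\varphi(r_0)]\big)\,\tilde g(r_0+\lambda^{-1/2}s)\,ds.
\end{align*}
The integrand tends pointwise to $e^{\frac12\varphi''(r_0)s^2}\tilde g(r_0)$ and, by $\varphi''\le\tfrac12\varphi''(r_0)$ near $r_0$, is dominated by $\big(\sup_{|r-r_0|<\delta}|\tilde g|\big)\,e^{\frac14\varphi''(r_0)s^2}\in L^1(\R)$; dominated convergence then gives $\int_{|s|<\delta\sqrt\lambda}(\cdots)\,ds\to\tilde g(r_0)\int_\R e^{\frac12\varphi''(r_0)s^2}\,ds=\tilde g(r_0)\sqrt{2\pi/|\varphi''(r_0)|}$. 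Together with Step 2, and recalling $\tilde g(r_0)=\int_0^\pi g(r_0,\theta)\,d\theta$, this yields the asserted equivalence.

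\textbf{Expected main obstacle.} The rescaling and the dominated-convergence step near $r_0$ are routine. The point requiring care is the tail bound of Step 2: on the unbounded domain $(0,\infty)$ one must genuinely separate $\varphi$ from its maximal value outside a neighbourhood of $r_0$ — uniqueness of the maximum alone is not enough without some control of $\varphi$ (and integrability of $\tilde g$) as $r\to0^+$ and $r\to\infty$ — and this is exactly where hypothesis (i) enters. A secondary technicality is showing that the angular average $\tilde g$ is continuous and locally bounded near $r_0$, which needs a mild local domination of $g$ beyond bare continuity.
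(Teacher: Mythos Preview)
Your proposal is correct and follows exactly the approach the paper intends: the paper simply states that the result ``is a direct application of the Laplace method'' and refers to \cite{Mil06}, without giving any details. Your write-up fills in the standard Laplace-method argument (Fubini reduction to a one-dimensional integral in $r$, tail cut-off using the strict global maximum, and local rescaling with dominated convergence at the peak), so it is entirely in line with --- and more explicit than --- the paper's own proof.
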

The proof of this result is a direct application of the Laplace method, see for instance \cite{Mil06}. 
As an immediate consequence of Lemma \ref{lem:asymptotic} we obtain 

\[
 \lim_{ \lambda \rightarrow \infty} c_{ 1, \lambda} = \frac{ r_0 \int_0^\pi \cos \theta \exp ( r_0 \cos \theta / \sigma) \sin^{d-2} \theta \d \theta}{ \int_0^\pi \exp ( r_0 \cos \theta / \sigma) \sin^{d-2} \theta \d \theta} \, ,
\]
where $r_0$ is the minimum of the potential function $V_{\alpha, \beta} ( r)$. Let us note that the asymptotic study of
the coefficient $c_{1 , \lambda}$ when $\lambda \rightarrow \infty$ can also be performed using Lemma 
\ref{lem:asymptotic} for more general potentials than 
$V_{\alpha, \beta} ( r)$. In particular, we could also consider smooth potentials $V ( |v|)$ having a unique 
global minimum $r_0$ such that $V' (r) < 0$, for $0 < r < r_0$, and $V' ( r) > 0$, for $ r > r_0$.  
On the other hand, the asymptotic study of $c_{2,\lambda}$ could be addressed
following similar techniques as in \cite{F12}, however, we do not dwell upon this matter here and leave it for a future work.
\subsection*{Acknowledgments}
PAS and PD acknowledge support by the Engineering and Physical Sciences Research Council (EPSRC) under grants no. EP/M006883/1 and EP/P013651/1. This work was accomplished during the visit of MB to Imperial College London with an ICL-CNRS Fellowship.
JAC was partially supported by the EPSRC grant number EP/P031587/1. PD also acknowledges support by the Royal Society and the Wolfson Foundation through a Royal Society Wolfson Research Merit Award no. WM130048 and by the National Science Foundation (NSF) under grant no. RNMS11-07444 (KI-Net). PD is on leave from CNRS, Institut de Math\'ematiques de Toulouse, France. PAS and PD would like to thank the Isaac Newton Institute for Mathematical Sciences for support and hospitality during the programme ``Growth form and self-organisation'' when part of this work was carried over which was supported by: EPSRC grant numbers EP/K032208/1 and EP/R014604/1. \\

Data statement: no new data were collected in the course of this research.

\end{document}